\theoremstyle{plain}
\newtheorem{theorem}{Theorem}[section]
\newtheorem{corollary}[theorem]{Corollary}
\newtheorem{definition}[theorem]{Definition}
\newtheorem{conjecture}[theorem]{Conjecture}
\newtheorem{problem}[theorem]{Problem}
\begin{document}

	\theoremstyle{definition} 

	\newtheorem*{notation}{Notation}  

	\theoremstyle{plain}      

	\def\H{{\mathbb H}}
	\def\F{{\mathcal F}}
	\def\R{{\mathbb R}}
	\def\Q{\hat{\mathbb Q}}
	\def\Z{{\mathbb Z}}
	\def\E{{\mathcal E}}
	\def\N{{\mathbb N}}
	\def\X{{\mathcal X}}
	\def\Y{{\mathcal Y}}
	\def\C{{\mathbb C}}
	\def\D{{\mathbb D}}
	\def\G{{\mathcal G}}
	\def\T{{\mathcal T}}

	\title[Recurrent flows by twisting]{Inducing recurrent flows by twisting on infinite surfaces with unbounded cuffs}
	
	\subjclass[2010]{30F20, 30F25, 30F45, 57K20}
	
	\keywords{}
	\date{}
	
	\author{Hrant Hakobyan, Michael Pandazis and Dragomir \v Sari\'c}
	
	\address[Hrant Hakobyan]{Department of Mathematics, Kansas State University\\ Manhattan, KS, 66506-2602, USA.}
	\email{hakobyan@math.ksu.edu}
	
	\address[Michael Pandazis]{Department of Mathematics and Computer Science, John Jay, CUNY \\ 524 West 59th Street,
N.Y., N.Y., 10019, USA.}
	\email{{mpandazis@jjay.cuny.edu}}

	\address[Dragomir \v Sari\' c]{PhD Program in Mathematics, The Graduate Center, CUNY \\ 365 Fifth Ave., N.Y., N.Y., 10016 and\newline Department of Mathematics, Queens College, CUNY\\ 65--30 Kissena Blvd., Flushing, NY 11367, USA.}
	\email{Dragomir.Saric@qc.cuny.edu}
	
	\thanks{The first author was partially supported by Simons Foundation Collaboration Grant, award ID: 638572.}
	\thanks{The third author was partially supported by PSC-CUNY grants.}

	\begin{abstract}
		
		A Riemann surface $X$ is parabolic if and only if the geodesic flow (for the hyperbolic metric) on the unit tangent bundle of $X$ is ergodic. Consider a Riemann surface $X$ with a single topological end and a sequence $\alpha_n$ of pairwise disjoint, simple closed geodesics converging to the end, called {\it cuffs}. Basmajian, the first and the third author, proved that when the lengths $\ell (\alpha_n)$ of cuffs are at most $2\log n$, the surface $X$ is parabolic. 
		One could expect that having arbitrary large cuff lengths $\ell (\alpha_n)$ (think of $\ell (\alpha_n)=n!^{n!}$)
		would allow the geodesic flow to escape to infinity, thus making $X$ not parabolic. 
		
		Contrary to this and motivated by their proof of the Surface Subgroup Theorem,
		Kahn and Markovi\'c conjectured that for every choice of lengths $\ell (\alpha_n)$, there is a choice of twists that would make $X$ parabolic. We show that their conjecture is essentially true. Namely, for any sequence of positive numbers $\{ a_n\}$, there is a choice of lengths $\ell (\alpha_n)\geq a_n$ such that the (relative) twists by $1/2$ make $X$ parabolic. This result extends to the surfaces with countably many ends while it does not hold for uncountably many ends.
	\end{abstract}
	
	\maketitle
	
	\section{Introduction}
	

	A Riemann surface $X$ is {\it parabolic} if it does not support a Green's function, which we denote by $X\in O_G$. 
	Then, $X\in O_G$ if and only if the geodesic flow on the unit tangent bundle $T^1X$ is ergodic if and only if the Brownian motion on $X$ recurrent if and only if almost every geodesic on $X$ is recurrent, see \cite{Hopf,Nicholls,Sullivan,Tsuji}. For an additional (yet partial) list of equivalent conditions to $X\in O_G$ see \cite[Introduction]{BHS} as well as \cite{Bishop,Nevanlinna:criterion,Nicholls1,Fernandez-Melian,Astala-Zinsmeister,Sullivan}.
	
	In this paper we study the \emph{type {problem} for infinite Riemann surfaces}, that is we would like to determine whether a Riemann surface obtained by an explicit construction is parabolic or not. For us the Riemann surface $X$ is equipped with the hyperbolic metric, that is $X=\mathbb{H}/\Gamma$ where $\Gamma$ is a Fuchsian group. We say that $X$ is {\it infinite} if $\Gamma$ is not finitely generated. We assume that $\Gamma$ is of the first kind, i.e. the limit set $\Lambda(\Gamma)=\mathbb{S}^1$,  since otherwise $X$ is easily seen to be not parabolic. 
	
	It is known that $X$ can be obtained by gluing countably many geodesic pairs of pants via isometries along their cuffs (i.e. boundary geodesics of pairs of pants), see \cite{AlvarezRodriguez} and also \cite{Basmajian,BasmajianSaric,Portilla}. The geodesic pairs of pants are uniquely determined by the cuff lengths, and the isometric gluings along two cuffs are determined by a real parameter called the {\it twist}. 
	The \textit{Fenchel-Nielsen parameters} of $X$ are the pair of sequences $(\{ \ell_n\},\{t_n\})$, where $\ell_n$ is the length and $t_n$ is the twist of the $n$-th cuff, see Figure \ref{fig:flute} for an example. Therefore, the set of all hyperbolic metrics on a topological surface $X$ (modulo isometries and markings) corresponds to all choices of the Fenchel-Nielsen parameters.

	The type problem for Riemann surfaces has been studied extensively by many authors when the Riemann surfaces were naturally defined by either gluing construction along the slits or by covering maps or other constructions motivated by complex analysis considerations, see e.g.  \cite{AhlforsSario,Doyle,LS,Milnor,Mer}. In  \cite{BHS} the question of deciding when $X\in O_G$ from the data of its Fenchel-Nielsen parameters $(\{ \ell_n\},\{t_n\})$ was studied.  A particularly interesting class of examples considered in \cite{BHS} are the Riemann surfaces with one infinite topological end: the (tight) flute surfaces and Loch-Ness monsters.

	\subsection{Tight flute surfaces}
	
	A Riemann surface $X$ is said to be a \textit{(tight) flute surface}, if it has countably many punctures that accumulate to a single non-isolated topological end. We will fix the pants decomposition of $X$ with cuffs $\{\alpha_n\}_{n=1}^{\infty}$ as in Figure \ref{fig:flute}. Let $\ell_n$ and $t_n$ be the length and twist parameters of $\alpha_n$. The twist parameter $t_n$, for $-1/2\leq t_n\leq 1/2$, represents the (oriented) relative length of the arc between the feet of $\gamma_n'$ and $\gamma_{n-1}''$ along $\alpha_n$, see Figure \ref{fig:flute}. 
	By \cite[Theorem 9.1]{BHS}, a flute (Riemann) surface $X(\{\ell_n\},\{t_n\})$ is parabolic if 
	\begin{equation*}
		\label{eq:flute-suff-O_G}
		\ell_n\leq 2\log n
	\end{equation*}
	for all but finitely many $n$, for any choise of the twists $\{t_n\}$.

	\begin{figure}[htb]
		\begin{picture}(350,180)
			\put(20,0)
			{\includegraphics[width = 4.5in, height = 2.5in]{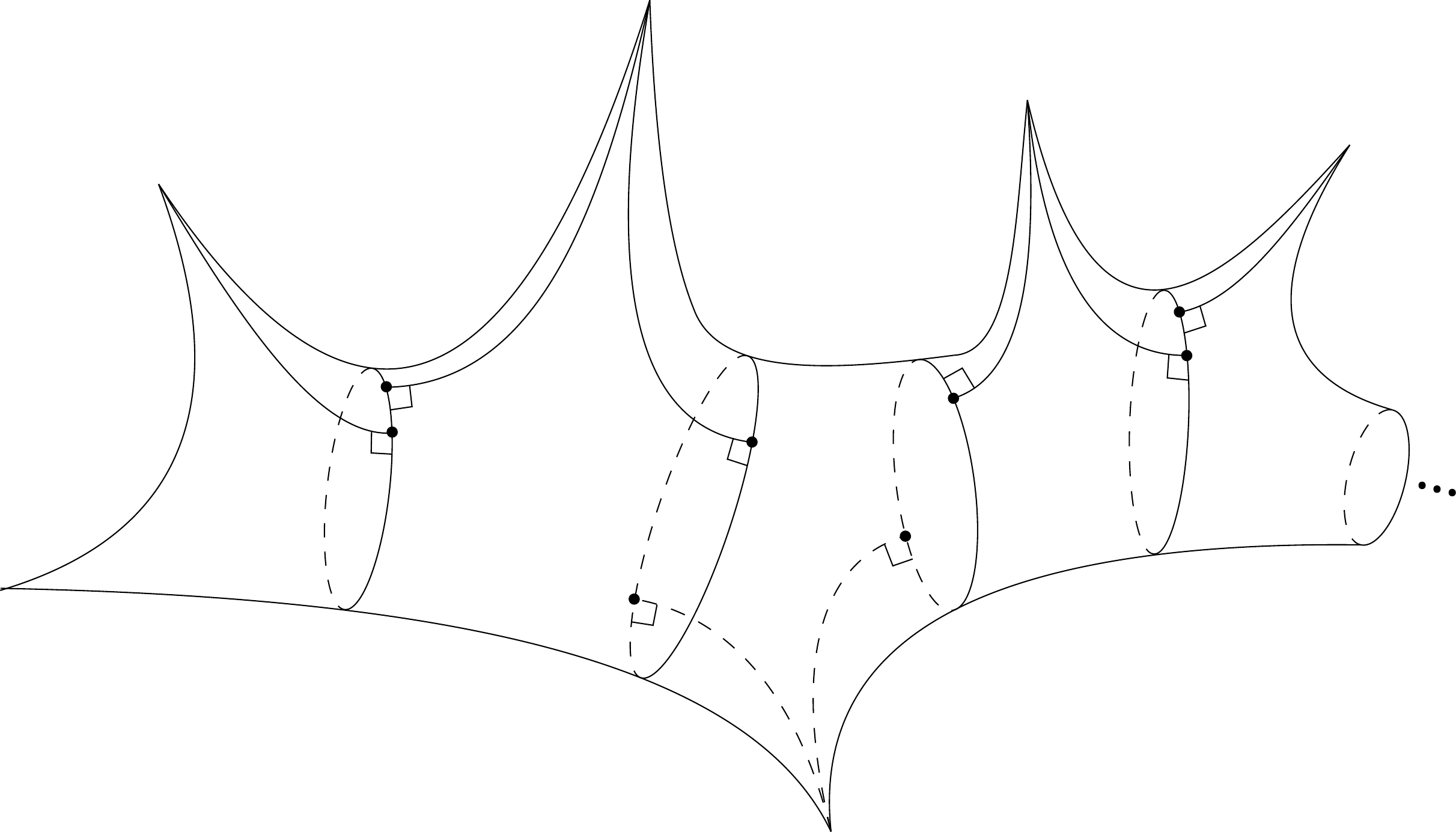}}
			\put(63,63){\LARGE \it P}
			\put(71,59){0}
			\put(123,27){\LARGE \it P}
			\put(131,23){1}
			\put(195,109){\LARGE \it P}
			\put(203,105){2}
			\put(253,40){\LARGE \it P}
			\put(261,36){3}
			\put(296,47){\LARGE \it P}
			\put(304,43){4}
			\put(91,39){$\alpha_1$}
			\put(157,25){$\alpha_2$}
			\put(229,39){$\alpha_3$}
			\put(273,50){$\alpha_4$}
			\put(74,92){$\gamma_0$}
			\put(141,110){$\gamma_1'$}
			\put(155,89){$\gamma_1''$}
			\put(182,43){$\gamma_2'$}
			\put(198,60){$\gamma_2''$}
			\put(231,117){$\gamma_3'$}
			\put(256,99){$\gamma_3''$}
			\put(298,115){$\gamma_4'$}
		\end{picture}
		\caption{A flute surface}
		\label{fig:flute}
	\end{figure}

	Intuitively, the geodesic flow on the unit tangent bundle $T^1X$ of $X$ {\it is not} ergodic if there is a ``lot of space'' on $X$ for the geodesics to escape towards the end space $\partial_{\infty}X$. A lot of space could mean that the size of the ``openings'' $\ell_n$ converges to $\infty$ very fast.  
	To support this intuitive reasoning, consider the {\it zero-twist} flute surface $X=X(\{\ell_n\},\{0\})$, i.e. $t_n=0$ for all $n$. By Theorem 9.4 of \cite{BHS},
	$X(\{\ell_n\},\{0\})$ is not parabolic if,   $$\ell_n\geq p\log n,$$ 
	for a fixed $p>2$ and for all but finitely many $n$.
	
	The {\it half-twist} flute surface $X(\{\ell_n\},\{t_n\})$ is defined by the condition $t_n=1/2$ for all $n$. Basmajian, the first and the third author \cite[Theorem 9.7]{BHS} established that a half-twist flute surface {$X(\{\ell_n\},\{1/2\})$} with {\it increasing} and {\it concave} lengths of cuffs is not parabolic if (and only if) $\ell_n\geq p\log n$ {for all but finitely many $n$ and} for a fixed $p>4$. This further supports the intuitive reasoning of large cuffs preventing the geodesic flow from being ergodic. On the other hand, the second and the third author \cite{PandazisSaric} showed that the condition that the lengths are concave {\it cannot be removed}. Namely, \cite{PandazisSaric} finds a class of half-twist flutes that are parabolic with $$q\log n\geq\ell_n\geq p\log n$$ for any $q>p>0$. 
	
	In two separate discussions with Jeremy Kahn and Vladimir Markovi\'c, the following conjecture was made,  see also Question 1.9(3) in \cite{BHS}.
	
	\begin{conjecture}[Kahn-Markovi\' c]\label{conj}
		Given a sequence $\{ \ell_n\}$ of non-decreasing positive numbers (possibly $\lim_{n\to\infty}\ell_n=\infty$), there always exists a choice of twists $\{ t_n\}$ such that the geodesic flow on the unit tangent bundle of the flute surface $X(\{\ell_n\},\{t_n\})$ is ergodic, i.e. $X(\{ \ell_n\},\{t_n\})\in O_G$.
	\end{conjecture}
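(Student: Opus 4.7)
My plan is to combine a modulus criterion for parabolicity with an inductive tuning of twists. Recall from \cite{BHS} that $X(\{\ell_n\},\{t_n\})\in O_G$ if and only if the extremal length of the family of curves escaping to the end is infinite; equivalently, for a flute one can test whether the modulus of annular regions $A_N$ separating a fixed compact core from the tail beyond $\alpha_N$ tends to zero as $N\to\infty$. The aim is to choose $\{t_n\}$ so that the cumulative shearing at each cuff drives this modulus down regardless of how fast $\ell_n\to\infty$.

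First I would quantify the twist contribution. Fixing the adjacent lengths, the isometry type of the pair of pants $P_n$ is determined and its two seams meet $\alpha_n$ at prescribed feet; altering $t_n$ shifts one side by a signed arc of length $t_n\ell_n$. Any arc in $X$ crossing $\alpha_n$ from a seam on one pant to a seam on the other therefore picks up a cuff-parallel subarc whose length is forced by $t_n$. Inductively, having fixed $t_1,\dots,t_{n-1}$, one examines the finitely many short arcs in $X_n=P_0\cup\dots\cup P_n$ that threaten to become escape paths and chooses $t_n$ to lengthen each of them maximally. In view of the half-twist analysis of \cite{BHS} and \cite{PandazisSaric}, the natural universal candidate is $t_n=1/2$, but the inductive scheme allows sign and magnitude corrections tailored to the geometry accumulated so far. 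An alternative, more flexible route would be probabilistic: choose each $t_n$ independently and uniformly from $[-1/2,1/2]$ and try to show via a Borel--Cantelli or zero-one argument that almost every sequence of twists yields a parabolic surface, from which the existence claim follows immediately.

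The main obstacle, and the likely reason the paper establishes only the ``essentially true'' form stated in the abstract, is the tension between additive shearing and modulus accounting. The extra length imposed at $\alpha_n$ by any twist is of additive order $\ell_n$, while the modulus of the standard collar around $\alpha_n$ scales like $1/\ell_n$; once $\ell_n$ grows faster than exponentially, the modulus contribution from $\alpha_n$ becomes summable no matter how $t_n$ is chosen at that single cuff, and a Cantor-like escape corridor can then be built through a subsequence of such cuffs. Overcoming this would require producing a \emph{multiplicative} rather than additive obstruction — for instance, arranging consecutive twists so that escape arcs are forced to wind many times around $\alpha_n$ before reaching $\alpha_{n+1}$ — and the apparent absence of such a mechanism for arbitrary $\{\ell_n\}$ is presumably why the authors instead enlarge each $\ell_n$ to a favourable value in $[a_n,\infty)$ and prove parabolicity with $t_n=1/2$.
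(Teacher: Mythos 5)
You were asked about Conjecture \ref{conj}, which the paper itself does \emph{not} prove: it is stated as an open problem, and the paper establishes only relatives of it --- Theorem \ref{thm:parabolic-fast} (one may \emph{enlarge} each prescribed length, after which half-twists give parabolicity), Theorem \ref{thm:parabolicity half-twists} (with twists in $\{0,1/2\}$, parabolicity holds provided $\sum_k e^{-\sigma_k/2}=\infty$, where $\sigma_k=\ell_{n_k}-\ell_{n_{k-1}}+\cdots+(-1)^{k-1}\ell_{n_1}$ is the \emph{alternating} sum over the half-twisted cuffs), and Corollary \ref{cor:KM1} (the conjecture is true for any sequence having infinitely many equal consecutive values along a subsequence). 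Your attempt is likewise not a proof, and you concede as much: the inductive scheme of choosing $t_n$ to ``lengthen each threatening arc maximally'' is never made quantitative (no inequality relating the twist choice to $\mathrm{mod}(\Gamma_n)$ is established), and the Borel--Cantelli route is only named, not executed. Since the target is an open conjecture, an incomplete attempt is expected; the real problem is the claim in your third paragraph.

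The ``obstacle'' you describe there is not merely unproved --- it is false, and the paper's own results refute it. You assert that once $\ell_n$ grows super-exponentially, the collar moduli ($\lesssim 1/\ell_n$) become summable ``no matter how $t_n$ is chosen at that single cuff,'' and that a positive-modulus escape corridor can then be built, i.e.\ that no choice of twists can rescue parabolicity for such sequences. Take $\ell_{2k-1}=\ell_{2k}=k!^{k!}$: this is non-decreasing and grows as fast as you like, yet Corollary \ref{cor:KM1} (with $n_k=k$, all twists equal to $1/2$) gives $\sigma_{2k}=0$, hence $\sum_k e^{-\sigma_k/2}=\infty$ and the flute is parabolic. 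Two things go wrong in your reasoning. Logically, summability of the moduli of separating annuli only says that the serial-rule \emph{lower} bound for the extremal length from the core to the end fails to be infinite; it does not produce an escaping family of positive modulus, so no ``corridor'' follows. (The equivalence $X\in O_G \Leftrightarrow \sum e^{-\ell_n/2}=\infty$ is special to zero twists, by \cite{BHS}; with half twists the correct quantity is $\sum e^{-\sigma_n/2}$, by \cite{PandazisSaric}.) Geometrically, you analyze each cuff's twist in isolation, whereas the entire mechanism of the paper is a cancellation \emph{across} cuffs: in the shear coordinates of \cite{Saric10}, a half-twist at $\alpha_{n_k}$ contributes $+\ell_{n_k}/2$ or $-\ell_{n_k}/2$ to the accumulated shear according to the parity of $k$ (see (\ref{eq: odd shears even index}) and (\ref{eq: odd shears odd index})), so the huge individual lengths telescope into the bounded quantities $\sigma_k$. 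This sign alternation is precisely the non-additive obstruction you claim appears to be absent; it exists, it is why arbitrarily large cuffs are compatible with parabolicity, and it is why the conjecture is expected to be true rather than false.
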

	
	The conjecture proposes that the influence of the twists is strong enough to make a flute surface parabolic,  even if the lengths of cuffs increase arbitrarily fast. This conjecture was motivated by the proof of the Surface Subgroup Theorem \cite{KM}. Prior to the conjecture, Basmajian and the third author \cite{BasmajianSaric} showed that for any sequence of lengths $\{ \ell_n\}$ there is a choice of twists $\{ t_n\}$ such that $X(\{\ell_n\},\{t_n\})$ has covering group of the first kind (which is a necessary but not sufficient condition for parabolicity). 
	
	In this paper, we prove Conjecture \ref{conj} under the additional assumption that the cuff lengths can be taken larger than the assigned lengths $\{\ell_n\}$, and all the twists $\{t_n\}$ are equal to $1/2$. The geometry of $X$ when the lengths converge to infinity is very sensitive to the twists. In fact, uncountably many conditions on the lengths and twists need to hold only to guarantee that the covering group $\Gamma$ is of the first kind (see \cite[Theorem C]{Saric10}). When all the twists are $1/2$, the surface is symmetric, and the uncountably many conditions are reduced to a single condition, which allows us to employ hyperbolic geometry estimates. Surprisingly, there are choices of arbitrarily large cuff lengths that make the covering groups of the {\it half-twist flutes} of the first kind, which, again, by symmetry, is equivalent to the flutes being parabolic (see Theorem \ref{thm:parabolicity half-twists} and its proof, and Corollary \ref{cor:KM1}).

	\begin{theorem}\label{thm:parabolic-fast}
		For every non-decreasing sequence $\ell_n$ there is a sequence $\ell_n'\geq\ell_n$ such that the half-twist flute surface $X(\{\ell_n'\},\{1/2\})$ is parabolic.
	\end{theorem}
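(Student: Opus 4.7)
The strategy is to exploit the extra symmetry of the half-twist case that is emphasized in the paragraph preceding the theorem. When every twist equals $1/2$, the flute $X$ admits an orientation-reversing isometric involution whose fixed set is a bi-infinite geodesic crossing each cuff orthogonally at its midpoint. As signaled in the introduction, this symmetry collapses the uncountably many conditions needed for first-kindness of a general flute to a single condition, and (cf.\ the reference to Theorem \ref{thm:parabolicity half-twists} and Corollary \ref{cor:KM1}) makes first-kindness of the uniformizing Fuchsian group \emph{equivalent} to parabolicity. Hence it suffices to construct $\ell_n' \geq \ell_n$ so that the covering group $\Gamma$ of $X(\{\ell_n'\},\{1/2\})$ is of the first kind, i.e.\ its limit set is all of $\mathbb{S}^1$.

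I would then work in $\mathbb{H}$ normalized so that the symmetry axis is the imaginary axis $I$ and the lifts $\widetilde\alpha_n$ of the cuffs cross $I$ monotonically, accumulating at a single endpoint $p_\infty \in \mathbb{S}^1$. The group $\Gamma$ is generated by translations $T_n$ along $\widetilde\alpha_n$ of length $\ell_n'$ together with the parabolic elements at the cusps, and the geometry of each cusped pair of pants $P_{n-1}$ (with cuffs $\alpha_{n-1},\alpha_n$ and one puncture) is completely determined by $\ell_{n-1}'$ and $\ell_n'$ via standard hyperbolic trigonometry for right-angled hexagons degenerated at one vertex. Thus, once $\ell_1',\ldots,\ell_{n-1}'$ are fixed, the choice of $\ell_n'$ determines the position of $\widetilde\alpha_n$ and, in particular, the size of the complementary arc $J_n\subset \mathbb{S}^1$ lying beyond $\widetilde\alpha_n$ toward $p_\infty$. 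First-kindness of $\Gamma$ follows once $\bigcap_n \overline{J_n}=\{p_\infty\}$ together with a quantitative density statement for $\Gamma$-translates in $\mathbb{S}^1\setminus \overline{J_n}$; translated back through the pants trigonometry, this becomes an upper bound of the form $\ell_n' \leq F(\ell_1',\ldots,\ell_{n-1}')$ for an explicit $F$ tending to $\infty$ in its last argument. I would then set $\ell_n':=\max\bigl(\ell_n,\, \ell_{n-1}'+1\bigr)$ (or any similar monotone choice) and verify inductively that this fits inside the admissible window carved out by $F$.

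The main obstacle, and precisely the content of what the Kahn-Markovi\'c conjecture predicts, is producing such an upper bound $F$ that is simultaneously loose enough to accommodate \emph{any} prescribed sequence $\{\ell_n\}$ and tight enough to force first-kindness. The half-twist is decisive here: gluing with twist $1/2$ reflects each successive pair of pants across the shared cuff, so the lifts $\widetilde\alpha_n$ alternate in a way that keeps their orbit refluxing across $I$; for generic or zero twist the lifts could march monotonically and cut off a proper subarc of $\mathbb{S}^1$, producing an open region of discontinuity no matter how one picked the lengths. Making this quantitative via collar-lemma estimates and the cusped pants seam-length formulas gives a lower bound on how much room each $\widetilde\alpha_n$ has in which to sit without breaking first-kindness, and this lower bound is what yields the inductive freedom to enlarge $\ell_n'$ past $\ell_n$. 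Once that estimate is in hand, the inductive construction closes and Theorem \ref{thm:parabolic-fast} follows.
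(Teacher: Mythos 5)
Your high-level framing agrees with the paper's: the front-to-back symmetry of the half-twist flute reduces parabolicity to the covering group being of the first kind, which in turn reduces to showing that the nested lifts of the cuffs in $\mathbb{H}$ accumulate at a single boundary point. But the proof has a genuine gap at its quantitative core, and the concrete construction you propose is wrong. The paper's actual criterion (Theorem \ref{thm:parabolicity half-twists}, proved via shear coordinates along a zig-zag family of nested geodesics $\{g_n\}$ and the horocyclic-path-length test of \cite{Saric10}, \cite{PandazisSaric}) is $\sum_k e^{-\sigma_k/2}=\infty$, where $\sigma_k=\ell_k-\ell_{k-1}+\cdots+(-1)^{k-1}\ell_1$ is an \emph{alternating} sum of the cuff lengths. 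The admissible range for $\ell_n'$ is therefore not a window of the form $\ell_n'\leq F(\ell_1',\ldots,\ell_{n-1}')$ with $F$ tending to infinity in its last argument; it is governed by the running alternating sum, i.e.\ by cancellation between consecutive lengths. Your heuristic that the half-twist ``keeps the lifts refluxing across the axis'' misses that what actually makes arbitrarily large cuffs possible is this cancellation: one forces $\sigma_{2k}=0$ for all $k$ by taking the lengths equal in consecutive pairs, $\ell_{2k-1}'=\ell_{2k}':=\ell_{2k}\geq\max(\ell_{2k-1},\ell_{2k})$, which is exactly how the paper deduces Theorem \ref{thm:parabolic-fast} from Corollary \ref{cor:KM1} while keeping $\ell_n'\geq\ell_n$.

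The concrete failure: your proposed choice $\ell_n':=\max\bigl(\ell_n,\ \ell_{n-1}'+1\bigr)$ does not work. For the input $\ell_n=n$ it returns $\ell_n'=n$ (up to an additive constant), an increasing concave sequence with $\ell_n'\geq 5\log n$ for all large $n$, and by \cite[Theorem 9.7]{BHS} (quoted in the introduction of this paper) such a half-twist flute is \emph{not} parabolic. Indeed, with that choice $\sigma_{2k}=k$, so the series $\sum_k e^{-\sigma_k/2}$ converges and the sufficient condition fails as well. The paper moreover stresses that two half-twist flutes with $\ell_n-\ell_n'\to 0$ can have different type, so no ``similar monotone choice'' can be substituted: the enlargement of the lengths must be tuned precisely to the alternating-sum criterion, and that criterion (together with the pairing trick) is the missing content of the proof.
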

	
	The proof of Theorem \ref{thm:parabolic-fast} relies on rather precise estimates on the convergence of a nested sequence of geodesics in $\mathbb{H}$ using a characterization in terms of shears from \cite[Theorem C]{Saric10} (see also \cite[Proposition A.1.]{PandazisSaric}).  We show that $\ell_n'$ can, in fact, be taken arbitrarily larger than $\ell_n$  and still satisfy the conclusion of the theorem. We also prove that only an infinite set $S$ of twists needs to be $1/2$ with the rest of twists being $0$ in order to obtain the same result (see Corollary \ref{thm:parabolicity half-twists}). Note that $S$ can be an extremely sparse subset of $\mathbb{N}$ (like $n_k=k!^{k!}$).

	\begin{theorem}
		\label{thm:main}
		Let $\{ \ell_n\}_{n=1}^{\infty}$ be a non-decreasing sequence of positive numbers with $\lim_{n\to\infty}\ell_n=\infty$. Then for every sequence of twists $t_n\in\{0,1/2\}$  such that 
		$\#\{n: t_n=1/2\}=\infty$ there exist non-decreasing sequences $\ell_n'$  and $\ell_n''$ 
		satisfying $\ell_n''\leq \ell_n\leq \ell_{n}'$  such that  the surfaces
		$X (\{\ell_n'\}, \{t_n\})\}$  and $X (\{\ell_n''\}, \{t_n\})\}$ are parabolic.
	\end{theorem}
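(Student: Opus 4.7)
The two sequences are built independently. \emph{Lower sequence.} Essentially take $\ell_n'' := \min\{\ell_n,\, 2\log n\}$ (with a trivial adjustment on finitely many initial terms to keep $\ell_n''>0$). Since both $\ell_n$ and $2\log n$ are non-decreasing, so is $\ell_n''$; by construction $\ell_n''\leq \ell_n$ and $\ell_n''\leq 2\log n$ for all but finitely many $n$. Hence \cite[Theorem 9.1]{BHS} applies: $X(\{\ell_n''\},\{t_n\})$ is parabolic for \emph{any} choice of twists, in particular for the prescribed one.

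\emph{Upper sequence.} This is the main content and follows the blueprint of Theorem \ref{thm:parabolic-fast}. Enumerate $S=\{n_1<n_2<\cdots\}=\{n:t_n=1/2\}$, which is infinite by hypothesis. Define $\ell_n'$ inductively in $n$: on a zero-twist index $n\notin S$ set $\ell_n':=\max\{\ell_n,\,\ell_{n-1}'\}$ (so the sequence is non-decreasing and satisfies $\ell_n'\geq \ell_n$), while on a half-twist index $n_k\in S$ pick $\ell_{n_k}'\geq \ell_{n_k}$ very large, in a way specified in the next step.

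The ``very large'' is quantified via the shear characterization of first-kind Fuchsian groups from \cite[Theorem C]{Saric10}, or more concretely the nested-geodesic version \cite[Proposition A.1]{PandazisSaric} that drives Theorem \ref{thm:parabolic-fast}. A half-twist cuff of length $L$ contributes a shear whose size scales like $e^{L/2}$; having fixed $\ell_j'$ for $j<n_k$, choose $\ell_{n_k}'$ large enough that this shear overwhelms the cumulative contribution of all the preceding zero-twist cuffs. Performing this at every $n_k\in S$ makes the uniformising Fuchsian group $\Gamma$ of the first kind. Finally, because every $t_n\in\{0,1/2\}$, the surface admits a global orientation-reversing isometric involution fixing each cuff setwise (the pants reflection is compatible with both of the symmetric twist values). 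As in the pure half-twist setting discussed just before Theorem \ref{thm:parabolic-fast}, this symmetry upgrades ``first kind'' to ``parabolic'', so $X(\{\ell_n'\},\{t_n\})$ is parabolic.

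\emph{Main obstacle.} The delicate step is the inductive choice of $\ell_{n_k}'$: we must give a quantitative shear bound at a half-twist cuff of prescribed length, in the presence of a possibly long run of already-fixed zero-twist cuffs whose lengths may be arbitrarily large. Propagating the shear estimates of \cite[Proposition A.1]{PandazisSaric} across mixed-twist blocks requires explicit nested-horocycle computations in $\mathbb{H}$, and $\ell_{n_k}'$ must be chosen to grow fast enough to dominate every accumulation of shears from the preceding zero-twist cuffs. This quantitative control of mixed-twist shears is the technical core of the theorem.
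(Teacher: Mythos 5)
Your lower sequence is correct and is a genuine shortcut relative to the paper: $\ell_n''=\min\{\ell_n,2\log n\}$ (adjusted on finitely many initial terms) is non-decreasing, satisfies $\ell_n''\leq\ell_n$ and $\ell_n''\leq 2\log n$ for all but finitely many $n$, so \cite[Theorem 9.1]{BHS} gives parabolicity for \emph{every} choice of twists. The paper instead obtains both sequences from the single sufficient condition of Theorem \ref{thm:parabolicity half-twists}; your route for $\ell_n''$ is more elementary but may shrink the lengths drastically, which the statement permits.

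The upper sequence is where the gap is, and the proposed mechanism is oriented the wrong way. The criterion the paper actually proves for symmetric flutes with infinitely many half-twists (Theorem \ref{thm:parabolicity half-twists}) is $\sum_k e^{-\sigma_k/2}=\infty$, where $\sigma_k=\ell_{n_k}-\ell_{n_{k-1}}+\cdots$ is the \emph{alternating} sum of the lengths at the half-twist indices: by \eqref{eq: odd shears even index} and \eqref{eq: odd shears odd index} the term $\ell_{n_k}/2$ enters the shear with sign $(-1)^{k-1}$, the zero-twist cuffs essentially telescope away in the products $e^{\pm(s_1+\cdots+s_n)}$ computing the horocyclic lengths, and what survives is $e^{\pm\sigma_k/2}$. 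Parabolicity therefore comes from \emph{cancellation} between consecutive half-twist lengths, not from domination. If, as you propose, each $\ell_{n_k}'$ is chosen so large that it ``overwhelms the cumulative contribution of all the preceding cuffs'', then $\sigma_k\geq\ell_{n_k}'-\ell_{n_{k-1}}'\to\infty$ rapidly, so $\sum_k e^{-\sigma_k/2}$ converges and the sufficient condition fails; neither \cite[Theorem C]{Saric10} nor \cite[Proposition A.1]{PandazisSaric} then yields that the group is of the first kind, and the guiding heuristic of the whole paper is that making cuffs large by itself pushes the surface \emph{away} from $O_G$. The construction that works is the one behind Corollary \ref{cor:KM1}: pair up the half-twist indices and set $\ell_n'=\ell_{n_{2k}}$ for $n_{2k-1}\leq n\leq n_{2k}$ and $\ell_n'=\ell_n$ otherwise. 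This is non-decreasing, satisfies $\ell_n'\geq\ell_n$, and forces $\sigma_{2k}=0$, whence $\sum_k e^{-\sigma_k/2}=\infty$ and Theorem \ref{thm:parabolicity half-twists} applies; the mirror construction $\ell_n''=\ell_{n_{2k-1}}$ on each block $n_{2k-1}\leq n\leq n_{2k}$ gives the lower sequence from the same theorem. Your final step --- that for twists in $\{0,1/2\}$ the front-to-back symmetry upgrades ``first kind'' to ``parabolic'' --- is correct and is exactly what the paper uses.
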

	

	The construction of the sequences $\{ \ell_n'\}$ and $\{ \ell_n''\}$ is such that it specifically works for surfaces with zero or half twists. There is a delicate balance between the lengths of consecutive cuffs in order to achieve parabolicity.  In fact, there are surfaces $X(\{ \ell_n\},\{ 1/2\} )$ and $X(\{ \ell_n'\},\{ 1/2\} )$
	with $\lim_{n\to\infty}(\ell_n-\ell_n')=0$ such that the first surface is parabolic and the second is not.

	\vskip .2 cm
	
	\subsection{Non-planar surfaces}	Flute surfaces are planar, but the above results hold for (non-planar) surfaces with (infinite) genus. Indeed, let $X$ be an infinite genus surface with a single topological end, called the {\it infinite Loch-Ness monster surface}. Let $\alpha_n$ be the disjoint cuffs converging to the end such that the part of $X$ between $\alpha_n$ and $\alpha_{n+1}$ is homeomorphic to a {torus (or a  genus $g$ surface)} minus two disks. Let $\beta_n$ be the geodesic which cuts off the genus between $\alpha_n$ and $\alpha_{n+1}$. Even more generally, we will allow that $\beta_n$ cuts off a higher genus but finite surface between $\alpha_n$ and $\alpha_{n+1}$ (see Figure \ref{fig:LNM} and Corollary \ref{cor:lochness}).
	
	\begin{theorem}
		\label{thm:LochNess}
		Let $X$ be an infinite Loch-Ness monster surface with cuffs $\alpha_n$ converging to the topological end and cuffs $\beta_n$ that cut off the genus between $\alpha_n$ and $\alpha_{n+1}$. We assume that $\ell (\beta_n)\leq M<\infty$ for all $n$. Given a sequence $a_n$ of positive numbers, there exists a choice of lengths $\ell (\alpha_n)\geq a_n$ such that the infinite Loch-Ness monster surface $X$ with half-twists and chosen lengths is parabolic. Moreover, the lengths can be chosen such that $\ell (\alpha_n)=a_n$ except on an infinite subsequence of $\mathbb{N}$ where the lengths could be larger.
	\end{theorem}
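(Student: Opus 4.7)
The plan is to reduce Theorem~\ref{thm:LochNess} to the half-twist flute case of Theorem~\ref{thm:parabolic-fast}, using the hypothesis $\ell(\beta_n)\le M$ to view the genus pieces as a bounded-geometry perturbation of a flute. First I would extend $\{\alpha_n,\beta_n\}$ to a full geodesic pants decomposition $\mathcal{P}$ of $X$. The subsurface $S_n$ between $\alpha_n$ and $\alpha_{n+1}$ has finite topology with $\partial S_n=\alpha_n\sqcup\alpha_{n+1}$ and contains $\beta_n$ of length at most $M$; by standard collar and moduli estimates, the remaining cuffs of $\mathcal{P}$ inside $S_n$ can be chosen to have lengths bounded by a constant depending only on $M$ and the topology of $S_n$. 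I assign twist $1/2$ to each $\alpha_n$, and arbitrary twists (say $0$) to the auxiliary cuffs including the $\beta_n$.

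Next I would invoke the shear-based characterization of the covering group being of the first kind from \cite[Theorem~C]{Saric10} (see also \cite[Proposition~A.1]{PandazisSaric}). As recalled in the paragraph preceding Theorem~\ref{thm:parabolic-fast}, for a half-twist flute the resulting uncountable family of shear conditions collapses by symmetry to a single divergent-series condition on $\{\ell(\alpha_n)\}$. The key claim is that the same reduction persists for our Loch-Ness monster $X$, up to a uniformly bounded error: the contributions of the auxiliary cuffs inside each $S_n$ to the shear sums across $\alpha_n$ are uniformly bounded in $n$, because those cuffs have lengths and twists varying in a compact set depending only on $M$ and the (fixed or bounded) topology of $S_n$, and shears depend continuously on the Fenchel--Nielsen data.

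Granting this uniform boundedness, the parabolicity criterion for $X$ differs from the half-twist flute criterion only by a bounded additive perturbation, so the inductive construction from Theorem~\ref{thm:parabolic-fast} (combined with Corollary~\ref{cor:KM1}/Theorem~\ref{thm:parabolicity half-twists}) carries over: on a sparse subsequence $\{n_k\}$ one picks $\ell(\alpha_{n_k})\ge a_{n_k}$ large enough to force divergence of the shear series even after the perturbation, while on the remaining indices one simply sets $\ell(\alpha_n)=a_n$. The main obstacle is the uniform bound on the auxiliary shear contributions: it requires a compactness argument for the moduli space of bounded-geometry pieces $S_n$ constrained by $\ell(\beta_n)\le M$, together with a careful verification that the precise nested-geodesic convergence estimates driving the proof of Theorem~\ref{thm:parabolic-fast} are stable under replacement of a single pair of pants by such a compact piece of bounded diameter. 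Once this stability is established, the flute argument yields the desired sequence $\{\ell(\alpha_n)\}$ with $\ell(\alpha_n)\ge a_n$ everywhere and equality off the sparse subsequence $\{n_k\}$.
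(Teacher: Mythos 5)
Your overall strategy---symmetry, the shear criterion of \cite[Theorem C]{Saric10}, boundedness of $\ell(\beta_n)$, and equalization on a sparse subsequence---is in the right spirit, but the central reduction has a genuine gap. You keep the genus inside the pants decomposition and try to absorb the auxiliary cuffs as a ``uniformly bounded perturbation of the shear sums.'' The shear machinery you are invoking, as it is used for half-twist flutes, requires that the front half of the symmetric surface be \emph{planar and simply connected}, so that it lifts conformally to a single infinite ideal polygon $\tilde X^*\subset\mathbb{H}$ bounded by a nested sequence of geodesics; the shears and the orthogeodesics $\eta_n$ are computed from the pentagon structure of that polygon. If the handles are retained, the front half is not simply connected, there is no such ideal polygon, and ``the contribution of the auxiliary cuffs to the shear sums'' is not a defined quantity in this framework. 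The compactness argument you flag as the main obstacle cannot be carried out because the object it is supposed to bound does not yet exist. The paper sidesteps this entirely: it cuts the genus off along the $\beta_n$ to form the \emph{basic end surface} $X_0$, whose front half is planar and simply connected; the hypothesis $\ell(\beta_n)\le M$ then enters only through the elementary observation that the orthogeodesic distances $\ell(\eta_n)$ between consecutive lifts of $\alpha_n$ have the same asymptotics as in the flute case, so the flute shear computation (Theorem \ref{thm:parabolicity half-twists}) applies verbatim and gives the criterion $\sum_k e^{-\sigma_k/2}=\infty$ (Theorem \ref{thm:basic_end}). The reattached handles are finite-area surfaces with no infinite end, so they are disposed of by the modulus reduction (Theorem \ref{thm:countable-single} and Corollary \ref{cor:lochness}), not by a perturbation of shears.

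A second, smaller issue: you say one picks $\ell(\alpha_{n_k})\ge a_{n_k}$ ``large enough to force divergence of the shear series.'' Making cuffs large does not by itself help; the series $\sum_k e^{-\sigma_k/2}$ diverges only if the alternating sums $\sigma_k=\ell_{n_k}-\ell_{n_{k-1}}+\cdots$ stay bounded, i.e.\ if there is cancellation. The actual mechanism (Corollary \ref{cor:KM1}, Corollary \ref{cor:any-lengths}) is to \emph{equalize} adjacent cuffs on the subsequence, $\ell_{n_{2k-1}}=\ell_{n_{2k}}$, which forces $\sigma_{2k}=0$ and hence divergence, while keeping $\ell(\alpha_n)=a_n$ off the subsequence and only raising lengths (never lowering them, since $a_n$ is increasing). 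You should state this cancellation explicitly rather than gesturing at largeness.
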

	
	\begin{figure}
		\begin{picture}(280,162)
			\put(39,-2){$\alpha_1$}
			\put(47,50){$\beta_1$}
			\centering
			\includegraphics[width=10 cm]{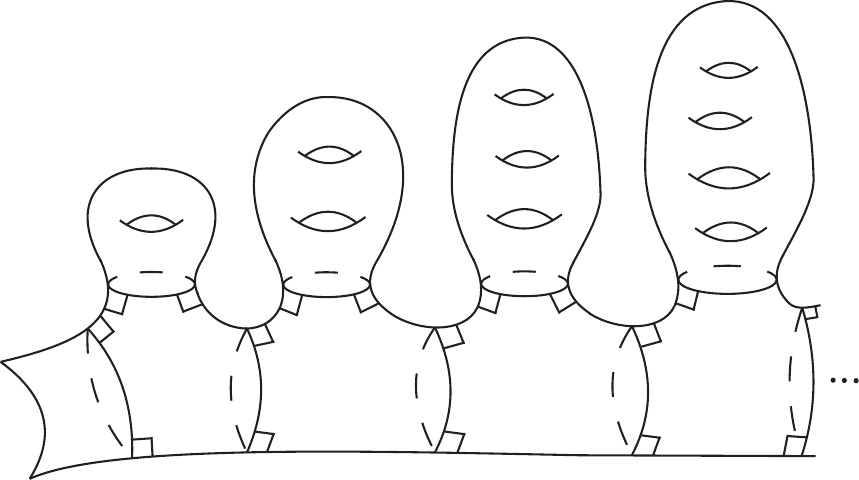}
		\end{picture}
		\caption{The infinite Loch-Ness monster surface}
		\label{fig:LNM}
	\end{figure}

	The above theorem is true for a surface obtained by attaching finitely many infinite Loch-Ness monsters and/or infinite flute surfaces to a finite area bordered surface (see Figure \ref{fig:finite non_planar ends}). In fact, if a surface $X$ has countably many ends $\mathcal{E}$ then we associate a bordered subsurface $X_{e_j}$ for each end $e_j$ that is not a puncture. The subsurface $X_{e_j}$ accumulates only to $e$ and has countably many closed geodesics on its border. Other subsurfaces corresponding to other ends are attached to these border geodesics. Each $X_{e_j}$ is either a flute surface, a Loch-Ness monster surface, or a Loch-Ness monster with truncated genus and some punctures in the place of the genus (see \S \ref{sec:prelim}). Denote by $\alpha_{j,n}$ the cuffs in $X_{e_j}$ that accumulate to $e_j$.

	\begin{figure}[htb]
		\begin{picture}(180,230)
			\put(100,150){$K$}
			\put(195,155){$X_1$}
			\put(85,40){$X_2$}
			\put(-15,155){$X_3$}
			\put(139,131){$\delta_1$}
			\put(86,104){$\delta_2$}
			\put(34,137){$\delta_3$}
			\put(-55,0){\includegraphics[width=4in,height=3in,angle=0]{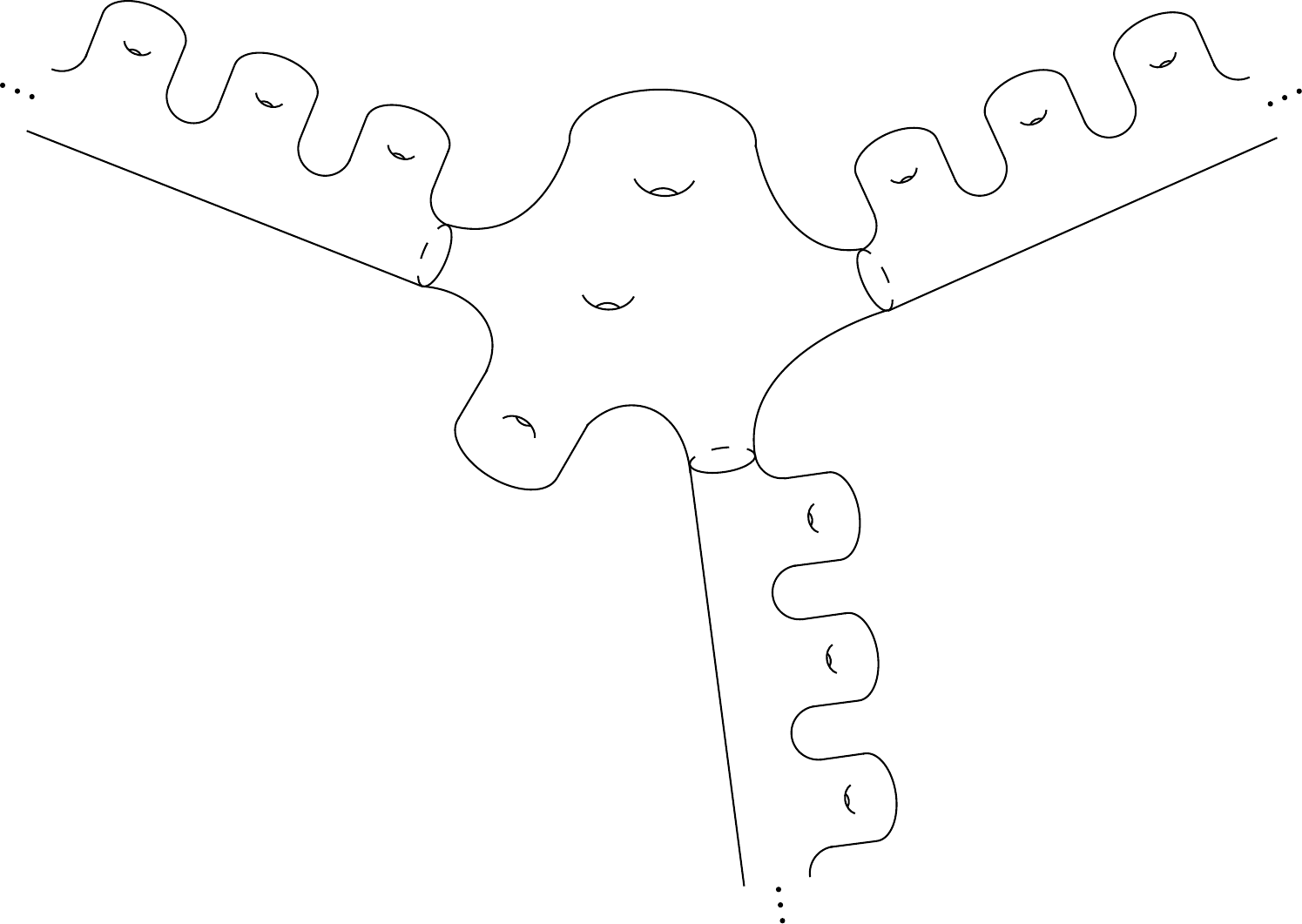}}
		\end{picture}
		\caption{A surface with finitely many non-planar ends.}
		\label{fig:finite non_planar ends}
	\end{figure}

	We prove a reduction theorem
	
	\begin{theorem}
		\label{thm:countable-to-one-end-O_G}
		Consider a Riemann surface $X$ with countably many ends $\mathcal{E}$.
		Then
		$
		X\in O_G
		$
		if and only if, for each $e_j\in\mathcal{E}$, the modulus of the curve family in $X_{e_j}$ that connects $\alpha_{j,1}$ to $e_j$ is zero. 
	\end{theorem}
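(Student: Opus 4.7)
The plan is to translate parabolicity into the vanishing of the conformal modulus of curves escaping to infinity and to exploit the disjointness of the end-neighborhoods $X_{e_j}$ to split that global modulus into a sum over the ends. Choose a compact core $K\subset X$ whose complement is the disjoint union $\bigsqcup_j X_{e_j}$ and whose boundary meets $\overline{X_{e_j}}$ precisely in the first cuff $\alpha_{j,1}$. Let $\Gamma$ denote the family of locally rectifiable curves in $X$ starting on $\partial K$ and leaving every compact set, and let $\Gamma_j$ be the family of curves in $X_{e_j}$ from $\alpha_{j,1}$ to $e_j$. I would invoke the classical Ahlfors--Sario characterization (already used throughout the introduction of \cite{BHS}) that $X\in O_G\iff\operatorname{mod}(\Gamma)=0$.

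The direction $(\Rightarrow)$ is immediate from monotonicity of modulus: every $\gamma\in\Gamma_j$ is, by construction, a curve in $X$ starting on $\partial K\supset\alpha_{j,1}$ and leaving every compact set, so $\Gamma_j\subset\Gamma$ and therefore $\operatorname{mod}(\Gamma_j)\leq\operatorname{mod}(\Gamma)=0$. For the direction $(\Leftarrow)$ I would assume $\operatorname{mod}(\Gamma_j)=0$ for every $j$. Given $\varepsilon>0$, pick a Borel density $\rho_j$ on $X_{e_j}$ admissible for $\Gamma_j$ with $\int_{X_{e_j}}\rho_j^2\,dA<\varepsilon\cdot 2^{-j}$, and define $\rho$ on $X$ by $\rho=\rho_j$ on $X_{e_j}$ and $\rho=0$ on $K$. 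Because the $X_{e_j}$ are pairwise disjoint and exhaust the non-compact part of $X$, every $\gamma\in\Gamma$ eventually remains in a unique $X_{e_{j'}}$, so the tail of $\gamma$ after its last exit from $K$ is a curve in $\Gamma_{j'}$ with $\int_{\mathrm{tail}}\rho_{j'}\geq 1$; in particular $\int_\gamma\rho\geq 1$ and $\rho$ is admissible for $\Gamma$. Since $\int_X\rho^2\,dA=\sum_j\int_{X_{e_j}}\rho_j^2\,dA<\varepsilon$, we conclude $\operatorname{mod}(\Gamma)=0$, i.e.~$X\in O_G$.

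The principal obstacle is preliminary bookkeeping: making the compact core $K$ so that its complement decomposes as the disjoint union of the $X_{e_j}$ along the boundary curves $\alpha_{j,1}$, and checking that every curve escaping to infinity is eventually trapped in exactly one end-neighborhood. Both are built into the structural decomposition of $X$ described just before the theorem statement: each $X_{e_j}$ accumulates only at $e_j$, and distinct $e_j$ admit pairwise disjoint neighborhoods. Once this setup is in place, the heart of the argument is the countable disjoint additivity of modulus used in the $(\Leftarrow)$ direction, which proceeds by the standard gluing of near-extremal densities on the pieces $X_{e_j}$ with a $2^{-j}\varepsilon$ budget.
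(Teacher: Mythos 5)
Your forward direction is the same as the paper's and is fine: $\Gamma_{j}$ is a subfamily of the escaping curves from a compact set, so monotonicity gives $\mathrm{mod}(\Gamma_j)=0$. The backward direction, however, rests on two claims that are not established. First, the equivalence you call ``classical Ahlfors--Sario'' is $X\in O_G\Leftrightarrow\mathrm{mod}(\Gamma_n)\to0$, where $\Gamma_n$ joins $\partial X_1$ to $\partial X_n$ in a compact exhaustion; that is \emph{not} the same as $X\in O_G\Leftrightarrow\mathrm{mod}(\Gamma)=0$ for the family $\Gamma$ of curves that leave every compact set. The implication $\mathrm{mod}(\Gamma)=0\Rightarrow\lim_n\mathrm{mod}(\Gamma_n)=0$ is exactly the nontrivial content of the paper's Theorem \ref{thm:mod-ch}: assuming $X\notin O_G$ one takes the harmonic function $u$ with $0<u<1$ vanishing on $\partial X_1$, forms the integrable holomorphic quadratic differential $[d(u+iu^*)]^2$, shows via a collar-length estimate that trajectories accumulating at more than one end have infinite length and hence form a null family, and concludes that a positive-modulus family of trajectories escapes to single ends. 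By invoking the stronger equivalence as a black box you are assuming the main lemma that the paper proves; you need either that argument or a precise reference for the escaping-curve version of the criterion.

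Second, your justification that ``every $\gamma\in\Gamma$ eventually remains in a unique $X_{e_{j'}}$'' --- namely that the $X_{e_j}$ are pairwise disjoint and exhaust the noncompact part --- does not work: disjointness does not prevent a curve from oscillating among several pieces while still leaving every compact set, and for such a curve no connected tail lies in any $\Gamma_j$, so your glued density need not be admissible. The statement can be rescued for countable $\mathcal{E}$ (the $\omega$-limit set of a connected escaping curve in $X\cup\mathcal{E}$ is a continuum contained in the totally disconnected set $\mathcal{E}$, hence a single end), but that argument must be supplied. Relatedly, when $\mathcal{E}$ is infinite there is no compact core $K$ with $X\setminus K=\bigsqcup_j X_{e_j}$ and all $\alpha_{j,1}$ on $\partial K$; the $X_{e_j}$ are arranged hierarchically over countably many Cantor--Bendixson stages, with $\alpha_{j,1}$ lying deep inside earlier pieces. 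The paper sidesteps all of this by working with the family $\Gamma_{\infty}$ of curves converging to a \emph{unique} end, proving $X\in O_G\Leftrightarrow\mathrm{mod}(\Gamma_{\infty})=0$ (Theorem \ref{thm:mod-ch}), and then noting that $\Gamma_{\infty}$ overflows $\bigcup_e\Gamma_e$, so countable subadditivity gives $\mathrm{mod}(\Gamma_{\infty})\le\sum_e\mathrm{mod}(\Gamma_e)=0$; no gluing of near-extremal densities is needed.
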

	
	Using the above reduction theorem, we can extend the examples from single-end surfaces, such as the infinite flute surfaces and the infinite Loch-Ness monster surfaces, to all surfaces with, at most, countably many ends (see Corollary \ref{cor:any-lengths}).
	
	\begin{theorem}
		\label{thm:countable-ends-O_G}
		Let $X$ be a topological surface with countably many ends $\mathcal{E}$. Given a double sequence $\{ a_{j,n}\}$ of positive numbers increasing in $n$ for each fixed $j$, there exists a Riemann surface structure $Y$ on $X$ such that
		$$
		\ell (\alpha_{j,n})\geq a_{j,n}
		$$
		and
		$$
		Y\in O_G.
		$$
	\end{theorem}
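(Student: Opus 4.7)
The plan is to construct $Y$ end-by-end by invoking the single-end theorems \ref{thm:parabolic-fast} and \ref{thm:LochNess} and then applying the reduction Theorem \ref{thm:countable-to-one-end-O_G}.

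First, I fix a topological decomposition of $X$: pick a connected bordered core region $K$ of finite topological type so that $X\setminus K$ is a disjoint union of bordered subsurfaces $X_{e_j}$, one per non-puncture end $e_j\in\mathcal{E}$. As described just before the statement, each $X_{e_j}$ is either a flute surface, an infinite Loch-Ness monster, or a Loch-Ness monster with truncated genus and added punctures, with a preferred pants decomposition featuring boundary cuffs $\{\alpha_{j,n}\}_{n\geq 1}$ converging to $e_j$ and, where applicable, internal cuffs $\{\beta_{j,n}\}$ cutting off the genus between consecutive $\alpha_{j,n}$.

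Second, for each non-puncture end $e_j$ I choose lengths and twists on $X_{e_j}$ as follows. If $X_{e_j}$ is planar (flute type), I apply Theorem \ref{thm:parabolic-fast} to the sequence $\{a_{j,n}\}_n$ to obtain cuff lengths $\ell(\alpha_{j,n})\geq a_{j,n}$ and half-twists making the half-twist flute $X_{e_j}$ parabolic. If $X_{e_j}$ is non-planar, I first fix any uniformly bounded lengths for the internal genus-cutting cuffs (for instance $\ell(\beta_{j,n})\leq 1$) to meet the hypothesis of Theorem \ref{thm:LochNess}, and then invoke that theorem to obtain $\ell(\alpha_{j,n})\geq a_{j,n}$ and half-twists making $X_{e_j}$ parabolic. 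In the case of truncated-genus ends, the punctures contribute no modulus and may be treated by combining the two constructions. For the puncture ends there is nothing to choose. On $K$ I fix any hyperbolic structure and any twists on the finitely many cuffs $\alpha_{j,1}$ bounding it. This produces a Riemann surface structure $Y$ on $X$ with $\ell(\alpha_{j,n})\geq a_{j,n}$ for every $j,n$.

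Third, I verify $Y\in O_G$ using Theorem \ref{thm:countable-to-one-end-O_G}. By the standard extremal length characterization of parabolicity (equivalently, the doubling argument across $\alpha_{j,1}$), the parabolicity of the bordered one-end subsurface $X_{e_j}$ obtained in the previous step is equivalent to the modulus of the curve family in $X_{e_j}$ joining $\alpha_{j,1}$ to $e_j$ being zero. Since this holds for every $e_j\in\mathcal{E}$, Theorem \ref{thm:countable-to-one-end-O_G} yields $Y\in O_G$. The main obstacle lies precisely in this bridge: Theorems \ref{thm:parabolic-fast} and \ref{thm:LochNess} deliver parabolicity of the complete (infinite-area) single-end surfaces, while the reduction theorem asks for the zero-modulus condition on the bordered subsurface. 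Because the half-twist constructions of those theorems are intrinsic to a neighborhood of the end and parabolicity is stable under removing any fixed finite-area compact piece, the extremal-length equivalence transfers the conclusion to $X_{e_j}$ as required, and no additional estimates are needed.
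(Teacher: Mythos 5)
Your overall architecture matches the paper's: handle each end with a single-end parabolicity result and then combine via the reduction theorem, and your ``bridge'' (parabolicity of a complete one-end surface $\Leftrightarrow$ zero modulus of the family joining $\alpha_{j,1}$ to the end, transferred to the bordered piece by monotonicity of modulus) is sound and is exactly how the paper uses Theorem \ref{thm:countable-to-one-end-O_G}. However, there are two genuine gaps in the execution. First, your decomposition is impossible when $\mathcal{E}$ is infinite: a core $K$ of finite topological type has finitely many boundary components, so $X\setminus K$ cannot be a disjoint union of one subsurface per non-puncture end. The paper instead peels off the subsurfaces $X_{e_j}$ generation by generation according to the Cantor--Bendixson derivation of $\mathcal{E}$ (over countably many stages), and the reduction theorem is stated relative to that hierarchical decomposition; your proof needs to invoke that construction rather than a single core.

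Second, and consequently, a subsurface $X_{e_j}$ of generation greater than one typically carries, besides the cuffs $\alpha_{j,n}$ accumulating to $e_j$, countably many boundary geodesics $\beta_{j,k}$ along which the previously peeled-off (possibly infinite) subsurfaces are reattached. Such an $X_{e_j}$ is neither a flute nor a Loch-Ness monster, so Theorems \ref{thm:parabolic-fast} and \ref{thm:LochNess} do not apply to it as stated; your case analysis (``flute, Loch-Ness monster, or Loch-Ness monster with punctures'') only covers the first generation. The paper fills exactly this hole with a separate statement about \emph{basic end surfaces} (Theorem \ref{thm:basic_end}): a bordered surface with cuffs $\alpha_n\to e$ and borders $\beta_n$ of uniformly bounded length, for which the zero-modulus condition is proved directly from the shear/horocyclic-path estimates, independently of what is glued to the $\beta_n$; parabolicity of the whole surface then follows from Theorem \ref{thm:countable-single} together with the observation that the condition $\sum e^{-\sigma_k/2}=\infty$ can be met with $\ell(\alpha_{j,n})\geq a_{j,n}$ by equating infinitely many adjacent pairs of cuff lengths (as in Corollary \ref{cor:KM1}). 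Your plan can be repaired by either quoting Theorem \ref{thm:basic_end} for these pieces or by capping off the $\beta_{j,k}$ with finite-area surfaces, applying the complete-surface theorems to the capped surface, and transferring back by monotonicity --- but as written the step is missing. You should also make explicit that the boundary geodesics $\beta_{j,k}$ (which are the attaching curves of the next-generation pieces, not among the constrained cuffs $\alpha_{j,n}$) are chosen with uniformly bounded lengths so that the bounded-border hypothesis holds at every generation.
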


	Finally, we remark that surfaces with countably many ends are the largest class of surface for which the above theorem can hold. Indeed, by \cite{Pandazis23} (see also \cite{Saric22}), if $X$ is a Riemann surface with a Cantor set of ends and $\alpha_{j,n}$, $n=1,\ldots , 2^{j+1}$, are cuffs at level $n$ that, for fixed $r>1$, satisfy 
	$$
	\frac{1}{n^2} \gtrsim \ell (\alpha_{j,n})\gtrsim \frac{j^r}{2^j}
	$$
	then
	$
	X\notin O_G.
	$
	\subsection{Overview and open questions}
	One of the main problems we are interested in is to find (hopefully necessary and sufficient) conditions on the Fenchel-Nielsen parameters of a flute surface which would imply parabolicity. The following  table summarizes some ot the recent results from \cite{BHS,PandazisSaric} and the current paper  in the case of symmetric surfaces. 

	\begin{center}
		\scriptsize
		\begin{table}[!h]
		\begin{tabular}{|c|c|c|c|c|c|c|c|l|}
			\hline
			\, & \, & \, & \,  \\
			{\, \bf Twists}\, &  \bf Lengths & \bf Conclusion & \bf Reference\, \\
			\, & \, & \, & \,  \\
			\hline	
			\, & \, & \, & \,  \\
			{\, \bf $t_n=0$} &  no restrictions & $X\in O_{G} \Longleftrightarrow \sum e^{-\ell_n/2} = \infty$ &  \cite{BHS}   \, \\
			\, & \, & \, & \,  \\
			\hline
			\, & \, & \, & \,  \\
			{\, \bf $t_n=1/2$}  \, & \, no restrictions \, & \, $X\in O_{G} \Longleftrightarrow \sum e^{-\sigma_n/2} = \infty$ \, & \, \cite{PandazisSaric} \,      \\
			\, & \, & \, & \,  \\
			\hline
			
			\, & \, & \, & \,  \\
			{\, \bf $t_n=1/2$}  \, & \, $\ell_n$ is concave \, & \, $X\in O_{G} \Longleftrightarrow \sum e^{-\ell_n/4} = \infty$  \, & \, \cite{BHS} \,  \\
			\, & \, & \, & \,  \\
			
			\hline
			\, & \, & \, & \,  \\
			{\, \bf $t_n\in\{0,1/2\}$}  \, & \, no restrictions \, & \, $X\in O_{G} \Longleftarrow \sum e^{-\tilde{\sigma}_n/2} = \infty$ \, & \, Thm \ref{thm:parabolicity half-twists} \,  \\
			\, & \, & \, & \,  \\
			\hline
		\end{tabular}
		\vskip 0.2cm
		\caption{Conditions for parabolicity of tight flutes.}
		\end{table}
	\end{center}
	Thus, a natural question would be the following.
	\begin{problem}
		Suppose $X=X(\{\ell_i\},\{t_i\})$ is a symmetric tight flute surface, i.e., $t_i\in\{0,1/2\}$. Find necessary and sufficient conditions for $X\in O_{G}$. What if $\ell_n$ is concave?
	\end{problem}
		
	Below are the results related to the Kahn-Markovi\'c conjecture and the effect of twisting (i.e., carefull choice of twists) on the type of the surface $X$.
	

 	
	\begin{table}[!h]
		\begin{center}
		\scriptsize
		\begin{tabular}{|c|c|c|c|c|c|c|c|l|}
			\hline
			\, & \, & \, & \,  \\
			{\, \bf Twists}\, &  \bf Lengths & \bf Conclusion & \bf Reference\, \\
			\, & \, & \, & \,  \\
			\hline
		\, & \, & \, & \,  \\
		{\, \bf $t_n'{\in} \, ??$}  \, & \,$ \forall \{\ell_n\}$ \, & \, $X\in O_{G} \overset{??}{\Longleftarrow}$  $(\ell_n,t_n) \rightsquigarrow (\ell_n,t_n')$\, & \, Conj. \ref{conj} \,  \\
		\, & \, & \, & \,  \\
		\hline
		\, & \, & \, & \,  \\
		{\, \bf $t_n'{\in} \{0,1/2\}$}  \, & \,$  \ell_{n_k}=\ell_{n_k+1}$ \, & \, $X\in O_{G} \, {\Longleftarrow}$  $(\ell_n,t_n) \rightsquigarrow (\ell_n,t_n')$\, & \, Cor. \ref{cor:KM1} \,  \\
		\, & \, & \, & \,  \\
		\hline
		\, & \, & \, & \,  \\
		{\, \bf $t_n'{\in} \{0,1/2\}$}  \, & \, $\ell_n'\geq \ell_n$ \, & \, $X\in O_{G} {\Longleftarrow}$  $(\ell_n,t_n)\rightsquigarrow(\ell_n',t_n')$  & \,Thm. \ref{thm:parabolic-fast}  \,  \\
		\, & \, & \, & \,  \\
		\hline
		\, & \, & \, & \,  \\
		{\, \bf $t_n'{\in} \{0,1/2\}$}  \, & \, $\ell_n' = \ell_n$  \, & \, $X\in O_{G} \overset{??}{\Longleftarrow}$  $(\ell_n,t_n)\rightsquigarrow (\ell_n,t_n')$\, & \, Conj. \ref{conj1} \,  \\
		\, & \, & \, & \,  \\
		\hline
		\end{tabular}
		\vskip 0.2cm
					\caption{Parabolicity through twisting.}
					\label{aaa}
	\end{center}
	\end{table}
	%

	As the second and third lines in the table above suggest one may suspect that the twists can in fact be taken to be in $\{0,1/2\}$ to guarantee that $X$ is parabolic. This can be thought of as a strong version of the Kahn-Markovi\'c conjecture.

	\begin{conjecture}[Basmajian, Hakobyan, Pandazis, \v{S}ari\'c]\label{conj1}
		For every non-decreasing sequence of lengths $\{ \ell_n\}$, there is a choice of twists $t_n\in\{0,1/2\}$ s.t. $X(\{ \ell_n\},\{t_n\})$ is parabolic.
	\end{conjecture}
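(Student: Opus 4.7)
The plan is to exploit the sufficient condition for parabolicity under twists in $\{0,1/2\}$ recorded in Theorem~\ref{thm:parabolicity half-twists}, namely $X\in O_G$ whenever $\sum_n e^{-\tilde\sigma_n/2}=\infty$, and to realize this divergence by a combinatorial choice of twist pattern for the prescribed $\{\ell_n\}$. Here $\tilde\sigma_n$ is the effective length through cuff $\alpha_n$ determined by the half-twists located at $\alpha_n$ and its neighbors. Thus the conjecture reduces to constructing $\{t_n\}\subset\{0,1/2\}$ so that $\tilde\sigma_n$ does not grow much faster than $2\log n$ on average.

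The first analytic step would be to obtain sharp two-sided estimates for $\tilde\sigma_n$ in terms of a short window of consecutive lengths and twists, extending the computations of \cite{BHS} and \cite{PandazisSaric} beyond the uniform cases $t_n\equiv 0$ and $t_n\equiv 1/2$. Heuristically, a half-twist at $\alpha_n$ ``folds'' the cuffs $\alpha_{n-1}$ and $\alpha_{n+1}$ together, so that the escape geodesic crossing the corresponding pant sees an effective aperture of order $\tfrac{1}{2}\max(\ell_{n-1},\ell_{n+1})$ plus a bounded perpendicular correction, while a zero-twist keeps $\tilde\sigma_n$ close to $\ell_n$. With such a formula in hand, I would build $\{t_n\}$ greedily: partition $\mathbb{N}$ into blocks on which $\ell_n$ is comparable, place half-twists at the block endpoints to absorb length jumps, and distribute the remaining half-twists inside each block so that the block contribution to $\sum e^{-\tilde\sigma_n/2}$ stays bounded below by a divergent series comparable to a dyadic $\sum 1/n$.

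A fallback strategy, closer in spirit to Corollary~\ref{cor:KM1}, would be to manufacture the ``repeated lengths'' hypothesis $\ell_{n_k}=\ell_{n_k+1}$ artificially by pairing consecutive indices: treat each pair as a single degree of freedom, use the twist $1/2$ on the pair to fuse the two cuffs into a composite opening of controlled size, and then apply the mechanism of Theorem~\ref{thm:parabolic-fast} to the coarsened sequence. The question is whether the coarsened estimates still yield divergence of $\sum e^{-\tilde\sigma_n/2}$ in the worst case, where the original lengths grow faster than any polynomial.

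The main obstacle is precisely the instability highlighted in the paragraph before Conjecture~\ref{conj1}: arbitrarily small perturbations of $\{\ell_n\}$ can destroy parabolicity, so no per-cuff inequality of the form $\tilde\sigma_n\le\tfrac{1}{2}\ell_n+C$ can hold robustly, and the key estimate must be intrinsically non-local. Shears across successive half-twisted cuffs interact in subtle ways, and whether the two-element set $\{0,1/2\}$ provides enough cancellation to match an arbitrary $\{\ell_n\}$ is precisely the content of the conjecture. If such cumulative cancellation fails, one would instead look for a counterexample in the form of a lacunary sequence $\{\ell_n\}$ whose growth outpaces anything a $\{0,1/2\}$-pattern can compensate for --- this would falsify Conjecture~\ref{conj1} while still leaving the original Kahn--Markovi\'c Conjecture~\ref{conj} (which allows all twists in $[-1/2,1/2]$) open.
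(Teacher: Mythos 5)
You are attempting to prove a statement that the paper itself records as an \emph{open conjecture}: Conjecture~\ref{conj1} is the authors' proposed strengthening of the Kahn--Markovi\'c conjecture, and the paper offers no proof of it. What the paper actually proves are weaker results in which the lengths may be modified (Theorem~\ref{thm:parabolic-fast}, Theorem~\ref{thm:main}) or in which the given sequence already contains infinitely many repeated pairs (Corollary~\ref{cor:KM1}). So there is no ``paper proof'' to match; the question is whether your plan closes the gap, and it does not.

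The concrete obstruction is in your first step. The only sufficient condition available for twists in $\{0,1/2\}$ is that of Theorem~\ref{thm:parabolicity half-twists}: $\sum_k e^{-\sigma_k/2}=\infty$ where $\sigma_k=\ell_{n_k}-\ell_{n_{k-1}}+\cdots+(-1)^{k-1}\ell_1$ is a \emph{global alternating sum} over the positions $n_k$ of the half-twists, not a local ``effective aperture'' at $\alpha_n$ plus a bounded correction. Since $\{\ell_n\}$ is non-decreasing, $\sigma_k=\sigma_{k-2}+(\ell_{n_k}-\ell_{n_{k-1}})$, so if for instance $\ell_{n+1}-\ell_n\geq n$ for all $n$, then every choice of the subsequence $\{n_k\}$ gives $\ell_{n_k}-\ell_{n_{k-1}}\geq n_{k-1}\geq k-1$, hence $\sigma_k\gtrsim k^2$ and $\sum_k e^{-\sigma_k/2}<\infty$. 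Thus for such sequences the criterion you propose to satisfy is unattainable by \emph{any} $\{0,1/2\}$ twist pattern, and since the criterion is only sufficient, its failure proves nothing either way; a genuinely new idea (a weaker sufficient condition, or a different mechanism for recurrence) is required. Your fallback also does not work: Corollary~\ref{cor:KM1} needs $\ell_{n_{2k-1}}=\ell_{n_{2k}}$ as a hypothesis on the \emph{given} lengths, and ``fusing'' consecutive cuffs to manufacture such pairs means replacing $\ell_n$ by larger values $\ell_n'$ --- which is exactly Theorem~\ref{thm:parabolic-fast}, the statement the authors can prove, and not Conjecture~\ref{conj1}, where the lengths are fixed. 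Your own closing paragraph correctly identifies that the estimates are non-local and that the conjecture might fail; that candor is appropriate, because what you have is a research program, not a proof.
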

	
	Finally, it would be interesting to find conditions on the Fenchel Nielsen parameters which would guarantee that $X$ is complete but is \emph{not parabolic}. That such flute surfaces exist was shown in \cite{Kinjo}, but no explicit necessary or sufficient condition in terms of the length or twist parameters is known.

	\section{Preliminaries}
	\label{sec:prelim}
	
	A Riemann surface $X$ will always be identified with $\mathbb{H}/\Gamma$, where $\mathbb{H}$ is the upper half-plane and $\Gamma$ is a Fuchsian group. The conformal hyperbolic metric (i.e. the metric of constant curvature $-1$) is induced by the hyperbolic metric on $\mathbb{H}$ since $\Gamma$ acts by isometries on $\mathbb{H}$. The Riemann surface $X$ is said to be {\it infinite} if $\Gamma$ is not finitely generated.
	
	A {\it geodesic pair of pants} is a bordered hyperbolic surface whose interior is homeomorphic to a sphere minus three closed disks and whose boundary components are either simple closed geodesics, called {\it cuffs}, or punctures. We will assume that at least one boundary component is a cuff. Two geodesic pairs of pants $P_1$ and $P_2$ with two cuffs $\alpha_1\subset \partial P_1$ and $\alpha_2\subset \partial P_2$ of equal length can be glued by an isometry along the cuffs to form a more complicated hyperbolic surface (that is homeomorphic to $\mathbb{S}^2$ minus four closed disks). The choice of gluings is determined by a real parameter, called the {\it twist}. Namely, {consider the unique} orthogeodesics from $\alpha_1$ to another boundary component of $P_1$ and from $\alpha_2$ to another boundary component of $P_2$. The signed distance between the feet $x_1\in\alpha_1$ and $x_2\in\alpha_2$ of the orthogeodesics along the identified cuffs $\alpha_1\equiv \alpha_2$ divided by the common length $\ell_X(\alpha_1)=\ell_X(\alpha_2)$ is the (relative) twist $t(\alpha_1)\in [-1/2,1/2]$, where the values $-1/2$ and $1/2$ represent the same gluing. Note that if a pair of pants has two cuffs of equal lengths, then they can be glued by an isometry to produce a bordered surface whose interior is homeomorphic to a torus minus a closed disk.
	
	By taking countably many geodesic pairs of pants $\{ P_n\}_n$ and gluing them by isometries along the cuffs of equal lengths, we obtain an infinite Riemann surface $X$. Let $\{\alpha_j\}_j$ be the family of the images of the cuffs in $X$. The hyperbolic metric (and, by extension, the complex structure) of $X$ is uniquely determined by the lengths $\{\ell (\alpha_j)\}_j$ and the twists $\{ t(\alpha_j)\}_j$. In general, the hyperbolic metric on $X$ may be incomplete, and the natural completion is obtained by attaching the hyperbolic funnels to cuffs not identified with other cuffs and half-planes to bi-infinite simple geodesics which are in the completion of the union of the pairs of pants (see Basmajian \cite{Basmajian}). In fact, every topological pants decomposition of a Riemann surface can be straightened to a geodesic pants decomposition of the interior of the convex core of $X$, and the whole surface $X$ is obtained by attaching the funnels and the geodesic half-planes to the boundary components of the convex core (see \cite{AlvarezRodriguez}, \cite{BasmajianSaric}).

	When we need to add either funnels or the half-planes, the Fuchsian group $\Gamma$ is of the second kind, and the convex core of $X$ is a proper subset of $X$. In the case when the convex core is the whole surfaces $X$, then $\Gamma$ is of the first kind, and $X$ is the union of countably many pairs of pants. We are mainly interested in the groups of the first kind. The pair of sequences {$(\{ \ell (\alpha_n)> 0\},\{ t(\alpha_n)\in(-1/2,1/2]\})$} are called the Fenchel-Nielsen parameters and they determine $X$.
	
	A Green's function on a Riemann surface $X$ is a harmonic function $g:X\setminus \{ z_0\}\to\mathbb{R}$ with $g(z)= -\log |z-z_0|+o(1)$ for $z\in X$ near the fixed point $z_0\in X$ and $g(z)\to 0$ as $z$ leaves every compact subset of $X$. A Riemann surface $X$ is said to be {\it parabolic}, in notation $X\in O_G$, if it does not support a Green's function (see \cite{AhlforsSario}). It is known that $X\in O_G$ if and only if the geodesic flow on the unit tangent bundle $T^1(X)$ is ergodic (see \cite{Nicholls}, \cite{Sullivan}).

	A topological end of a Riemann surface $X$ is an equivalence class of nested components {$\{ U_{n_j}\}_{n_j}$} {given by} the complements of {compact sets $\{K_n\}_n$ that form an exhaustion of $X$} (see \cite{Richards}).  {We say such an end is}
	{\it accumulated by genus} {if each $U_{n_j}$ corresponding to the end has positive genus.} The space of ends $\mathcal{E}$ is a closed subset of a Cantor set, and $X\cup\mathcal{E}$ is a compact set containing $X$ as an open, dense subset. An end $e\in\mathcal{E}$ is {\it simple} if it corresponds to a puncture or a funnel, i.e., if it is not accumulated by other ends or a genus. 
	
	We are interested in the case when $\mathcal{E}$ is countable. We consider the isolated points of $\mathcal{E}$. If an isolated end $e_1$ is not simple, then it is accumulated by genus. 
	Then, there exists a sequence of cuffs $\{\alpha_k\}_k$ accumulating to the end $e_1$ that are the boundary components of a sequence of components $\{U_j\}_j$ of the complements of compact exhaustion $\{ K_n\}_n$ of $X$ that determines the end $e_1$. The part of $X$ between any two adjacent cuffs is a finite surface with a genus. The first cuff cuts out an infinite  Loch-Ness monster surface with end $e_1$, denoted by $X_{e_1}$ (see Figure \ref{fig:decomposition}). 
	
	\begin{figure}[htb]
		\begin{picture}(180,230)
			\put(-55,0){\includegraphics[width=4in,height=3in,angle=0]{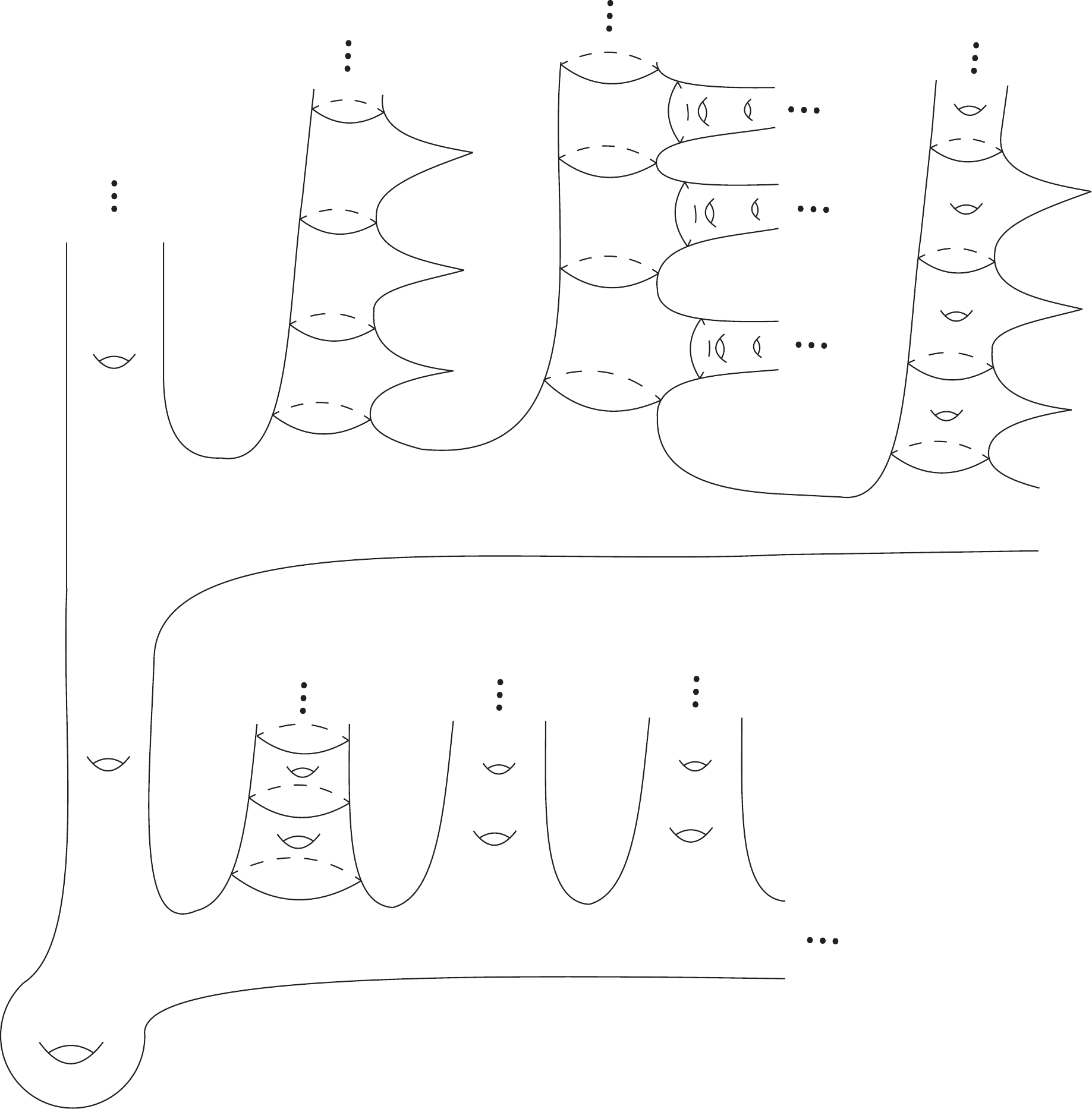}}
			\put(18,33){$\alpha_1$}
			\put(97,128){$\alpha_1$}
			\put(24,123){$\alpha_1$}
			\put(189,116){$\alpha_1$}
			\put(-5,60){$\alpha_2$}
			\put(78,162){$\alpha_2$}
			\put(78,184){$\alpha_3$}
			\put(116,146){$\beta_1$}
			\put(40,60){$X_{e_1}$}
			\put(3,160){$X_{e_2}$}
			\put(229,161){$X_{e_3}$}
			\put(23,90){$e_1$}
			\put(20,210){$e_2$}
			\put(210,205){$e_3$}
			\put(78,210){$e_4$}
			\put(158,136){$e_4^1$}
			\put(158,162){$e_4^2$}
			\put(158,184){$e_4^3$}
		\end{picture}
		\caption{A decomposition into subsurfaces.}
		\label{fig:decomposition}
	\end{figure}

	{Next}, we consider isolated points of $\mathcal{E}'$, where $\mathcal{E}'$ is the set of non-isolated points of $\mathcal{E}$. If an end $e_2\in\mathcal{E}'$ is isolated in $\mathcal{E}'$ and accumulated by simple ends $e_2^k$ in $\mathcal{E}$, then there is a choice of a sequence of cuffs cutting off the components of the complement of a compact exhaustion defining $e_2$. Each cuff of the sequence bounds a tight flute surface with one boundary geodesic (which is the cuff) and one topological end $e_2$. Each two adjacent cuffs bound a geodesic pair of pants with one puncture (see Figure \ref{fig:decomposition}). If an end $e_3\in\mathcal{E}'$ is isolated in $\mathcal{E}'$ and accumulated by both genus and simple ends, then again, we can find a sequence of cuffs that accumulate to the end such that between any two adjacent cuffs, we have a finite surface with genus and punctures. The first cuff cuts out a surface $X_{e_3}$, which is a Loch-Ness monster with punctures (see Figure \ref{fig:decomposition}).
	
	If an isolated point $e_4\in\mathcal{E}'$ is accumulated by a sequence of non-simple ends $e_4^k\in\mathcal{E}\setminus\mathcal{E}'$, then we cut out along the cuffs $\beta_k$ the corresponding (Loch-Ness monster) surfaces of the ends $e_4^k$. We obtained a bordered surface $X_{e_4}$ with one cuff cutting it off from $X$ and with countably many boundary geodesics $\beta_k$. Again, there is a sequence of cuffs $\alpha_n\subset X_{e_4}$ accumulating to the end $e_4$ such that between any two adjacent cuffs, we have a finite subsurface (see Figure \ref{fig:decomposition}). 
	
	If an isolated end $e_5\in (\mathcal{E}')'$ is accumulated by a sequence of ends $e_5^k$ that are isolated in $\mathcal{E}'$, then we can cut off these ends along cuffs $\{\beta_k\}_k$ to obtain the surface $X_{e_5}$  with a sequence of cuffs $\{\alpha_n\}_n$ converging to the end $e_5$ (see Figure \ref{fig:decomposition2}).
	
	\begin{figure}[htb]
		\begin{picture}(180,230)
			\put(-55,0){\includegraphics[width=4in,height=3in,angle=0]{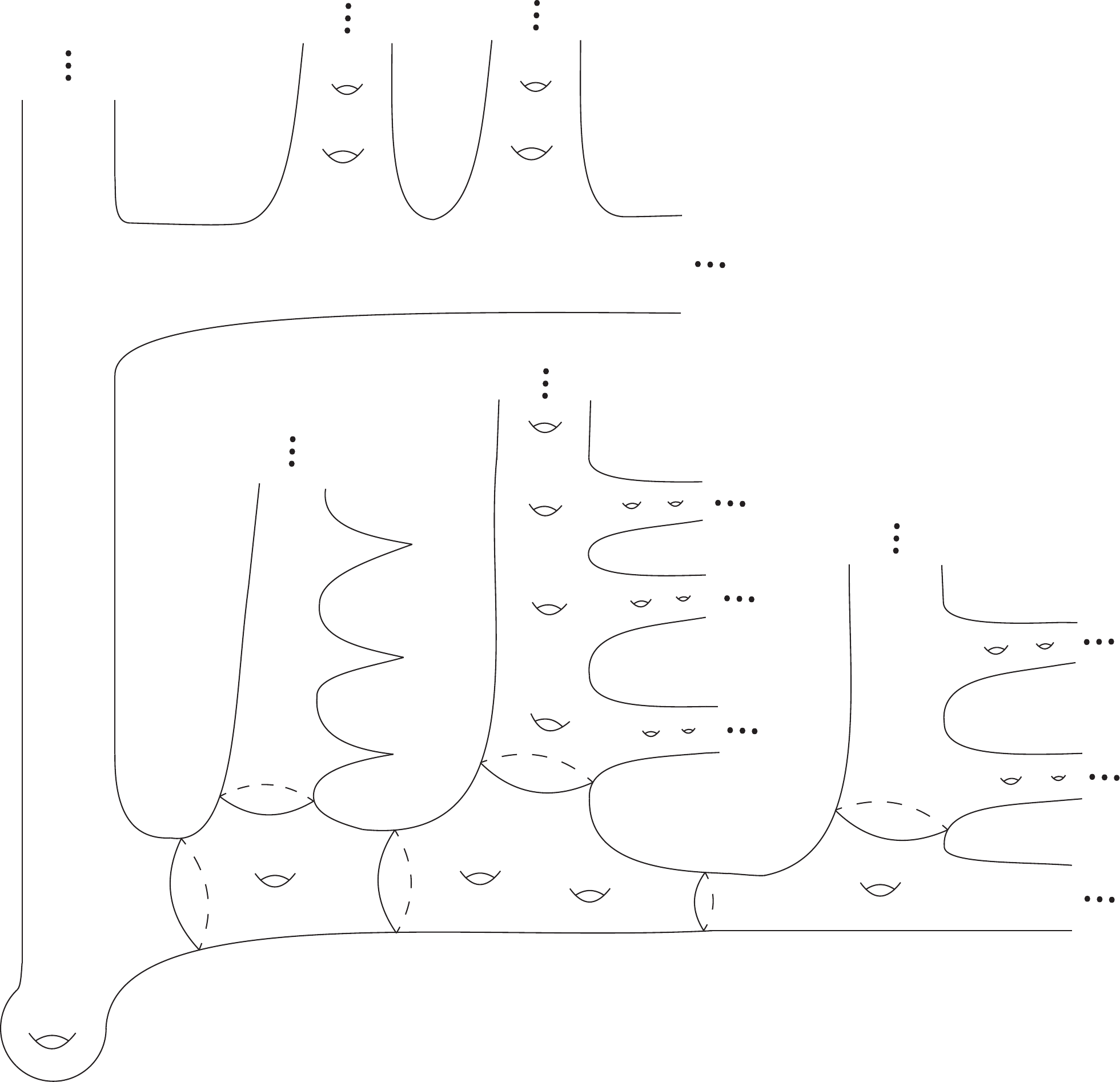}}
			\put(240,34){$e_5$}
			\put(42,93){$X_{e_5^1}$}
			\put(133,104){$X_{e_5^2}$}
			\put(229,72){$X_{e_5^3}$}
			\put(24,125){$e_5^1$}
			\put(90,141){$e_5^2$}
			\put(181,112){$e_5^3$}
			\put(-8,17){$\alpha_1$}
			\put(42,20){$\alpha_2$}
			\put(122,20){$\alpha_3$}
			\put(8,45){$\beta_1$}
			\put(74,49){$\beta_2$}
			\put(178,40){$\beta_3$}
		\end{picture}
		\caption{A decomposition into subsurfaces.}
		\label{fig:decomposition2}
	\end{figure}

	By continuing this process indefinitely we can 
	make every end to be isolated at a certain stage (over countably many ordinals), see \cite[page 34]{Kechris}. 
	
	Thus to any end $e$, there corresponds 
	a subsurface $X_e$, which accumulates to $e$ whose boundaries are cuffs that are used to cut off subsurfaces from the ends that are isolated before $e$ and that has a sequence of disjoint cuffs accumulating to $e$ as in Figures \ref{fig:decomposition} and \ref{fig:decomposition2}.

	\section{A reduction from countably many to a single end}
	
	When $X$ has countably many topological ends, we find a necessary and sufficient condition for $X\in O_G$ expressed in terms of one end at a time.
	
	Let $\{ X_n\}_{n=1}^{\infty}$ be an exhaustion of $X$ by finite area bordered geodesic subsurfaces. Let $\Gamma_n$ be the curve family in $X_n\setminus X_1$ that connects $\partial X_1$ and $\partial X_n$. 
	Recall that $X\in O_G$ if and only if $\mathrm{mod}(\Gamma_n)\to 0$ as $n\to\infty$ (\cite{AhlforsSario}). 
	
	\begin{definition}
		Let $\Gamma_{\infty}$ be the family of curves in $X\setminus X_1$ that starts at a single point of $\partial X_1$ on one side and accumulate to a unique topological end of $X$ on the other side. 
	\end{definition}
	
	We prove
	
	\begin{theorem}
		\label{thm:mod-ch}
		Let $X$ be any Riemann surface. Then $$X\in O_G\hskip .2 cm \mathrm{ if\ and\ only\ if}
		\hskip .2 cm\mathrm{mod}(\Gamma_{\infty})=0.
		$$
	\end{theorem}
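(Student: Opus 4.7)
The plan is to establish the equivalence by passing through the classical criterion $X\in O_G\Leftrightarrow\lim_{n\to\infty}\mathrm{mod}(\Gamma_n)=0$ recalled just before the statement, showing that the vanishing of $\mathrm{mod}(\Gamma_\infty)$ is equivalent to $\mathrm{mod}(\Gamma_n)\to 0$.

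For the direction $X\in O_G\Rightarrow\mathrm{mod}(\Gamma_\infty)=0$, I would use a subcurve argument. Every $\gamma\in\Gamma_\infty$ accumulates at a topological end of $X$ and hence must exit each compact set $X_n$, giving a subarc $\gamma_n\subset\gamma$ that connects $\partial X_1$ to $\partial X_n$ inside $X_n\setminus X_1$, i.e., $\gamma_n\in\Gamma_n$. Therefore, if $\rho_n^\ast$ is an extremal metric for $\Gamma_n$ (extended by zero outside $X_n\setminus X_1$), then
$$\int_\gamma\rho_n^\ast\,ds\geq\int_{\gamma_n}\rho_n^\ast\,ds\geq 1,$$
so $\rho_n^\ast$ is admissible for $\Gamma_\infty$, and hence $\mathrm{mod}(\Gamma_\infty)\leq\int(\rho_n^\ast)^2\,dA=\mathrm{mod}(\Gamma_n)\to 0$.

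For the reverse direction, I would argue contrapositively: assume $X\notin O_G$, so $m_\infty:=\lim_n\mathrm{mod}(\Gamma_n)>0$; we must show $\mathrm{mod}(\Gamma_\infty)>0$. The harmonic functions $u_n$ on $X_n\setminus X_1$ with values $0$ on $\partial X_1$ and $1$ on $\partial X_n$ satisfy $D(u_n)=\mathrm{mod}(\Gamma_n)=m_n$ and decrease monotonically to the harmonic measure $u$ of the ideal boundary from $\partial X_1$, which is non-constant with $D(u)=m_\infty>0$. Since $u\not\equiv 0$, there is a topological end $e$ to which $u$ assigns positive harmonic measure; the harmonic measure $u_e$ of $e$ alone is non-constant on $X\setminus X_1$ with $D(u_e)>0$, satisfies $u_e\to 1$ at $e$ and $u_e\to 0$ at every other end, and its integral curves from $\partial X_1$ accumulate at $e$ by the maximum principle. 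These integral curves thus lie in the subfamily $\Gamma_\infty^e\subset\Gamma_\infty$. A Cauchy--Schwarz estimate along these integral curves, parametrized by the local harmonic conjugate of $u_e$ and combined with the coarea formula in the conformal coordinates $(u_e,v_e)$ where $dA=du_e\,dv_e/|\nabla u_e|^2$, gives $\int\rho^2\,dA\geq D(u_e)$ for every $\rho$ admissible for $\Gamma_\infty^e$. Hence $\mathrm{mod}(\Gamma_\infty)\geq\mathrm{mod}(\Gamma_\infty^e)\geq D(u_e)>0$.

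The main obstacle is making rigorous the identification $\mathrm{mod}(\Gamma_\infty^e)\geq D(u_e)$: one must ensure that the integral curves of $\nabla u_e$ accumulate precisely at $e$ (avoiding the discrete set of critical points of $u_e$, which are negligible for modulus purposes), and that the harmonic conjugate $v_e$ is globally manageable for the coarea argument despite the possibly nontrivial topology of $X\setminus X_1$. Both issues can be addressed using the end-decomposition of Section~\ref{sec:prelim}: working inside the single-ended subsurface $X_e$ associated to $e$ reduces the problem to a setting where the gradient flow of $u_e$ is a well-behaved foliation from $\partial X_e$ to $e$ and the flux of $\nabla u_e$ across every level set equals $D(u_e)$, which in turn equals the total $v_e$-range counted with multiplicity.
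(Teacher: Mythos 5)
Your first direction is correct and is essentially the paper's argument: every curve of $\Gamma_\infty$ must exit each $X_n$, hence contains a subcurve in $\Gamma_n$, so by overflowing $\mathrm{mod}(\Gamma_\infty)\leq\mathrm{mod}(\Gamma_n)\to 0$.

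The converse direction, however, has a genuine gap at the step ``since $u\not\equiv 0$, there is a topological end $e$ to which $u$ assigns positive harmonic measure.'' The theorem is stated for an \emph{arbitrary} Riemann surface, whose end space may be uncountable (e.g.\ a Cantor set), and the harmonic measure of the ideal boundary can be non-atomic on the end space; in that case $u_e\equiv 0$ for every individual end $e$, the family $\Gamma_\infty^e$ carries no modulus, and your argument produces nothing. A concrete instance is $X=\hat{\mathbb C}\setminus E$ with $E$ a Cantor set of positive logarithmic capacity: $X\notin O_G$, every point of $E$ is an end, and harmonic measure on $E$ has no atoms. Note that $\Gamma_\infty$ does not require all curves to accumulate to the \emph{same} end, only that each curve accumulates to a \emph{unique} end, so localizing to a single $e$ is both unnecessary and, in general, impossible. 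The paper's proof avoids this entirely: it passes to the integrable holomorphic quadratic differential $[d(u+iu^*)]^2$ and shows that almost every non-singular horizontal trajectory emanating from $\partial X_1$ converges to a single, \emph{curve-dependent} end --- if a trajectory accumulated to two distinct ends it would cross the collar of a fixed separating closed geodesic infinitely often, each crossing contributing a definite amount of $|d(u+iu^*)|$-length, forcing infinite length, and integrability makes the set of such trajectories null. Positivity of the modulus of the trajectory family (\cite[Theorem 4.2]{Saric22}) and monotonicity then give $\mathrm{mod}(\Gamma_\infty)>0$. Your Cauchy--Schwarz/coarea estimate is the right tool (it is in effect that cited theorem), but it must be applied to the full trajectory family, not to a single end. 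Finally, your proposed repair via the end-decomposition of Section \ref{sec:prelim} cannot work here: that decomposition is only available for countably many ends, whereas Theorem \ref{thm:mod-ch} must hold for arbitrary end spaces and is in fact the input used later to treat the countable-ends case.
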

	
	\begin{proof}
		Assume that $X\in O_G$. Then $\mathrm{mod}(\Gamma_n)\to 0$ as $n\to\infty$. Since each curve in $\Gamma_n$ extends to a curve in $\Gamma_{\infty}$, it follows that
		$$
		0\leq \mathrm{mod}(\Gamma_{\infty})\leq \mathrm{mod}(\Gamma_{n}).
		$$
		By letting $n\to\infty$, we conclude that $\mathrm{mod}(\Gamma_{\infty})=0$.
		
		Assume that $\mathrm{mod}(\Gamma_{\infty})=0$. If $X\notin O_G$, then there exists a non-constant harmonic function $u:X
		\setminus \bar{X}_1\to\mathbb{R}$ whose boundary values are $0$ on $\partial X_1$ and that satisfies $0<u<1$ on $X\setminus\bar{X}_1$ (see \cite[page 204, Theorem IV.6C]{AhlforsSario}). The function $u$ is the limit of the solutions $u_n$ to the Dirichlet problem in $X_n\setminus X_1$ with boundary values $0$ on $\partial X_1$ and $1$ on $\partial X_n$. If $u^*$ is a local harmonic conjugate of $u$, then $[d(u+iu^*)]^2$ is a holomorphic quadratic differential on $X\setminus X_1$ whose horizontal trajectories are locally given by $u^*=const$ (see \cite[Theorem 4.2]{Saric22}). The quadratic differential $[d(u+iu^*)]^2$ is integrable since, in the natural parameter, the $du$-length of each $u^*=const$ is at most $1$, and the $du^*$-length of $\partial X_1$ is finite since $\partial X_1$ is compact.

		There is a positive measure set of non-singular horizontal trajectories that start at $\partial X_1$ and converge to the topological boundary of $X$ in the other direction. Indeed, let $e_1$ and $e_2$ be two ends in the accumulation of a horizontal trajectory $r$. The two ends are separated by a finite set of simple closed (hyperbolic) geodesics on $X$, and the trajectory $r$ will cross infinitely many times the (hyperbolic) collar neighborhood of a single closed geodesic. The $|d(u+iu^*)|$-length of each arc of $r$ that connects the two boundary components of the collar is bounded below. Therefore, the length of the trajectory $r$ is infinite. Since there are countably many simple closed geodesics on $X$ and $[d(u+iu^*)]^2$ is integrable, it follows that 
		there can be, at most, a zero-measure set of such trajectories. Since the modulus of the above family of horizontal trajectories is positive (see \cite[Theorem 4.2]{Saric22}), it follows by the monotonicity of the modulus that $\mathrm{mod}(\Gamma_{\infty})>0$. This is a contradiction. Thus $X\in O_G$.
	\end{proof}

	For a Riemann surface $X$ whose space of ends $\mathcal{E}$ is countable, the above theorem can be used to express the parabolicity condition in terms of conditions on $\{ X_e\}_{e\in\mathcal{E}'}$. We prove
	
	\begin{theorem}
		\label{thm:countable-single}
		Let $X$ be a Riemann surface whose space of ends $\mathcal{E}$ is countable. Let $\alpha_e$ be the boundary geodesic of the subsurface $X_e$ (that cuts off $X_e$ from the subsurfaces in the previous generations) corresponding to an end $e\in\mathcal{E}'$ and let $\Gamma_e$ be the family of arcs in $X_e$ connecting $\alpha_e$ to the end $e$. Then $$X\in O_G$$ if and only if
		\begin{equation}
			\label{eq:single_ends}
			\mathrm{mod}(\Gamma_e)=0, \ \mathrm{for\ all\ }e\in\mathcal{E}'.
		\end{equation}
	\end{theorem}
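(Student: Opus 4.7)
My plan is to invoke Theorem \ref{thm:mod-ch} and match the global modulus $\mathrm{mod}(\Gamma_\infty)$ with the end-by-end moduli $\mathrm{mod}(\Gamma_e)$. Since $\mathcal{E}$ is countable, one can split $\Gamma_\infty=\bigcup_{e\in\mathcal{E}}\Gamma_{\infty,e}$, where $\Gamma_{\infty,e}$ consists of those curves in $\Gamma_\infty$ accumulating to the specific end $e$.

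For the implication \eqref{eq:single_ends} $\Rightarrow X\in O_G$, the key geometric fact is that any $\gamma\in\Gamma_{\infty,e}$ must eventually cross the separating curve $\alpha_e$ for the last time; the ensuing tail is an arc in $X_e$ from $\alpha_e$ to $e$, hence an element of $\Gamma_e$. This shows that $\Gamma_e$ minorizes $\Gamma_{\infty,e}$, so $\mathrm{mod}(\Gamma_{\infty,e})\leq\mathrm{mod}(\Gamma_e)=0$; for the simple ends in $\mathcal{E}\setminus\mathcal{E}'$ (punctures) the same bound is classical. Countable subadditivity of conformal modulus, obtained by taking near-extremal admissible metrics $\rho_e$ for each $\Gamma_{\infty,e}$ and forming $\rho=\bigl(\sum_e \rho_e^2\bigr)^{1/2}$, then yields $\mathrm{mod}(\Gamma_\infty)=0$, and Theorem \ref{thm:mod-ch} gives $X\in O_G$.

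For the converse, I would fix $e\in\mathcal{E}'$ and work with a tailored exhaustion $\{X_n^{(e)}\}$ of $X$ in which $X_1^{(e)}$ is a finite-area bordered geodesic subsurface (for instance, a single pair of pants) having $\alpha_e$ as one of its boundary components and lying on the side of $\alpha_e$ opposite to $X_e$; the pants decomposition from Section \ref{sec:prelim} immediately furnishes such a subsurface. Since $X\in O_G$, Theorem \ref{thm:mod-ch} applied to this exhaustion gives $\mathrm{mod}(\Gamma_\infty^{(e)})=0$, where $\Gamma_\infty^{(e)}$ is the corresponding family of infinite curves issuing from $\partial X_1^{(e)}$. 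Every arc of $\Gamma_e$ starts at $\alpha_e\subset\partial X_1^{(e)}$ and accumulates to the single end $e$, so $\Gamma_e\subset\Gamma_\infty^{(e)}$, and monotonicity of modulus gives $\mathrm{mod}(\Gamma_e)\leq\mathrm{mod}(\Gamma_\infty^{(e)})=0$. The main obstacle is to verify that Theorem \ref{thm:mod-ch} is intrinsic, i.e.\ independent of the choice of exhaustion, so that it applies to our tailored one for each $e$; this is implicit in its very statement, as $X\in O_G$ does not depend on $X_1$, but it is worth making explicit before invoking it in this non-standard guise.
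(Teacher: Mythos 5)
Your proof is correct and follows essentially the same route as the paper's: the forward implication via the observation that every curve of $\Gamma_{\infty}$ overflows (has a tail in) one of the families $\Gamma_e$ together with countable subadditivity of the modulus, and the converse via monotonicity after re-basing the exhaustion at a finite subsurface having $\alpha_e$ on its boundary. Your explicit treatment of the curves accumulating to simple ends and of the exhaustion-independence of Theorem \ref{thm:mod-ch} merely spells out points the paper leaves implicit.
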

	
	\begin{proof}
		Assume that (\ref{eq:single_ends}) holds. 
		By Theorem \ref{thm:mod-ch}, $X\in O_G$ if and only if $\mathrm{mod}(\Gamma_{\infty})=0$. Recall that $\Gamma_{\infty}$ consists of curves that have one endpoint in a compact subset of $X$ and accumulate to a single topological end in the other direction. This implies that $\Gamma_{\infty}$ overflows $\cup_{e\in\mathcal{E}'}\Gamma_e$. It follows that $\mathrm{mod}(\Gamma_{\infty})\leq \sum_{e\in\mathcal{E}'}\mathrm{mod}(\Gamma_e)=0$. Therefore $X\in O_G$.
		
		Assume that $X\in O_G$. Let $ e\in \mathcal{E}'$. Let $K$ be a finite subsurface of $X$ that contains cuff $\alpha_{e}$ that is on the boundary of $X_{e}$. Since the curve families $\Gamma_{e}\subset X_{e}$ is a subfamily of the curve family $\Gamma_{\infty}$ that connects $K$ with the topological ends, it follows that $\mathrm{mod}(\Gamma_{e})=0$. 
	\end{proof}

	\section{Parabolic flute surfaces with arbitrary large cuffs}
	
	In this section, $X$ is a flute surface with cuffs $\{\alpha_n\}_{n=1}^{\infty}$ as in Figure \ref{fig:flute}. We establish that when the twists around $\alpha_n$ are all $1/2$ then the lengths $\ell_n$ of the cuffs $\alpha_n$ can be chosen to be larger than any prescribed sequence of positive numbers with $X\in O_G$. This answers the original question of Kahn and Markovi\' c of whether one can find flute surfaces with arbitrarily large cuffs.

	First, we establish a sufficient condition for parabolicity for \emph{arbitrary} flute surfaces with zero or half twists, generalizing the result for half-twist surfaces from \cite{PandazisSaric}. 
	We say that a flute surface is a \emph{symmetric flute surface with infinitely many half-twists} if $t_i\in\{0,1/2\}$ for $i\geq 1$, and $\#\{i: t_i=1/2\}=\infty$ (see Figure \ref{fig:half-zero flute}). Note that for such a surface, there always exists an increasing  sequence of positive integers $\{n_k\}$ such that 
	\begin{align}\label{twists}
		t_i=\begin{cases}
			1/2, & \mbox{if } i=n_k,\\
			0, & \mbox{otherwise}.
		\end{cases}
	\end{align}

	\begin{figure}[htb]
		\begin{picture}(180,150)
			\put(-45,90){$t_1=0$}
			\put(0,23){$t_2=0$}
			\put(44,18){$t_3=\frac{1}{2}$}
			\put(70,110){$t_4=\frac{1}{2}$}
			\put(133,116){$t_5=0$}
			\put(203,104){$t_6=\frac{1}{2}$}
			\put(-55,0){\includegraphics[width=10 cm]{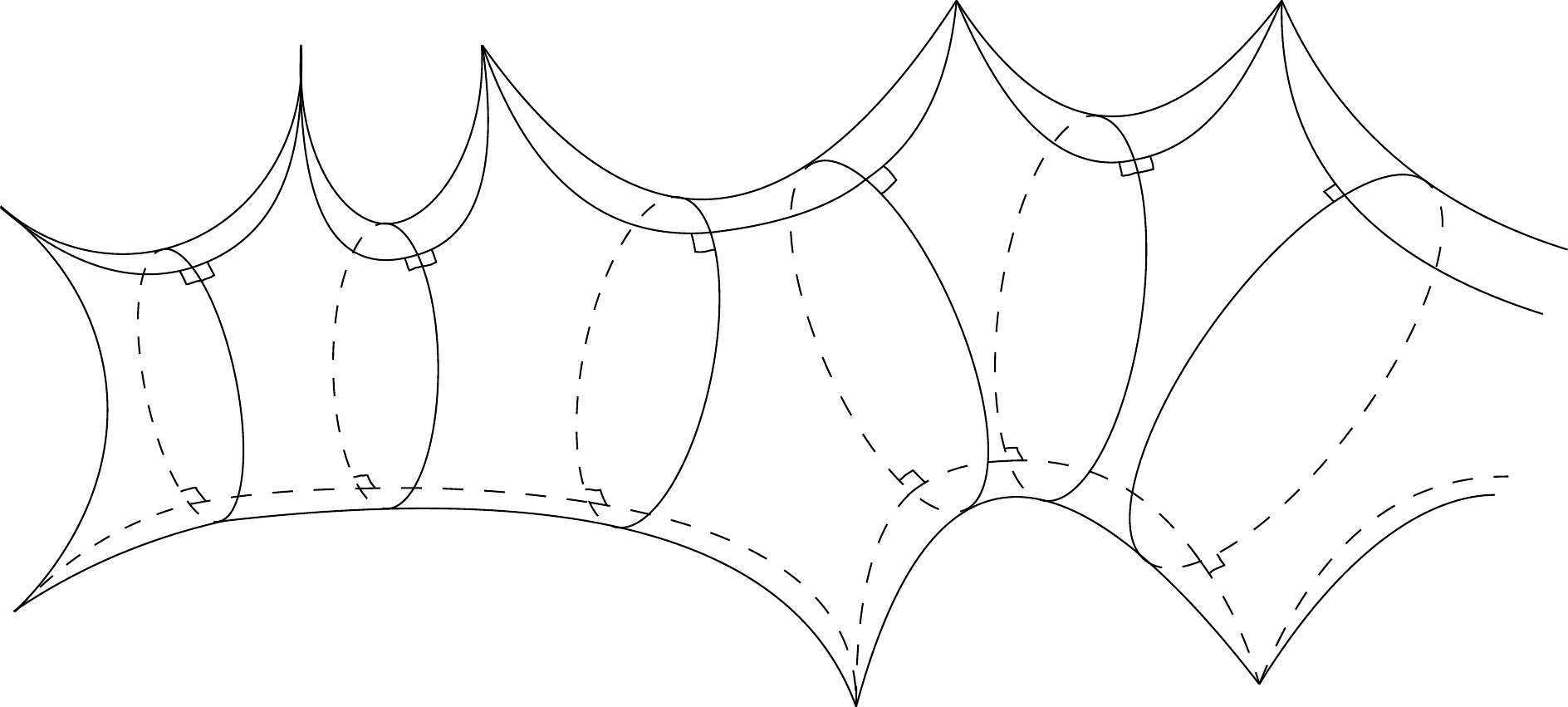}}
		\end{picture}
		\caption{The half and zero twists flute surface.}
		\label{fig:half-zero flute}
	\end{figure}

	\begin{theorem}
		\label{thm:parabolicity half-twists}
		Suppose  $X=X(\{\ell_n\}\{t_n\})$
		is a symmetric surface with infinitely many half twists as in \eqref{twists}.
		%
		Then for every non-decreasing sequence $\ell_n$ we have that $X(\{ \ell_n\},\{t_n\})\in O_G$, provided
		\begin{align}\label{parabolic:alternating}
			\sum_{k=1}^{\infty} e^{-\sigma_k/2}=\infty,
		\end{align}
		where $\sigma_{k}=\ell_{n_k}-\ell_{n_{k-1}}+\ldots+(-1)^{k-1} \ell_1$.
	\end{theorem}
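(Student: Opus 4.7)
The plan is to reduce the problem to the pure half-twist case analyzed in \cite{PandazisSaric}. By Theorem \ref{thm:mod-ch}, it suffices to show $\mathrm{mod}(\Gamma_\infty)=0$; equivalently, via the shear-based characterization of first-kind covering groups in \cite[Theorem C]{Saric10} and \cite[Proposition A.1]{PandazisSaric}, one shows that the nested sequence of half-planes bounded by lifts $\tilde\alpha_n\subset\mathbb{H}$ of the cuffs collapses to a single boundary point of $\mathbb{H}$.

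First, I would lift the cuffs to a nested chain $\tilde\alpha_1,\tilde\alpha_2,\ldots$ of pairwise disjoint geodesics in $\mathbb{H}$, obtained by successively unrolling the pants decomposition along the orthogeodesic path from $\alpha_1$ toward the end. Each step from $\tilde\alpha_{n-1}$ to $\tilde\alpha_n$ is dictated by the length $\ell_n$ together with the twist $t_n\in\{0,1/2\}$: a zero twist contributes a pure translation along the common perpendicular, while a half twist additionally reflects the foot of the next orthogeodesic to the antipodal point of $\tilde\alpha_{n-1}$.

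The heart of the argument is the shear computation between consecutive half-twist lifts $\tilde\alpha_{n_{k-1}}$ and $\tilde\alpha_{n_k}$. Iterating the zero-twist translations together with the bracketing half-twist reflections produces a telescoping alternating sum of lengths, which collapses to $\sigma_k=\ell_{n_k}-\ell_{n_{k-1}}+\cdots+(-1)^{k-1}\ell_1$: the intermediate zero-twist cuffs translate along the axis but do not contribute a sign-alternating term, and the monotonicity of $\{\ell_n\}$ together with standard collar-width estimates keep these translations from overwhelming the half-twist effect. Consequently, the $k$-th shear (or equivalently, the relevant cross-ratio distortion) is comparable to $e^{-\sigma_k/2}$, and divergence of $\sum_k e^{-\sigma_k/2}$ forces the nested half-planes to degenerate to a single ideal point, yielding $\mathrm{mod}(\Gamma_\infty)=0$ by the criterion of \cite[Proposition A.1]{PandazisSaric}.

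The main obstacle is the geometric bookkeeping in the middle step: one must verify uniformly that the finitely many zero-twist translations between consecutive half-twist reflections cannot destroy the alternating cancellation, and that the resulting shear estimate truly is of order $e^{-\sigma_k/2}$ rather than a weaker exponential. This reduces to a careful composition of finitely many hyperbolic isometries of $\mathbb{H}$, with the amount of non-alternating drift controlled by the collar widths of the intermediate cuffs; once this comparison is carried out, the theorem follows from exactly the divergence-of-exponentials mechanism used in the pure half-twist result of \cite{PandazisSaric}.
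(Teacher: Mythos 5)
Your overall strategy coincides with the paper's: reduce to showing that the lifts of the cuffs form a nested sequence of geodesics in $\mathbb{H}$ that does not accumulate in $\mathbb{H}$, and verify this with the shear criterion of \cite[Theorem C]{Saric10} and \cite[Proposition A.1]{PandazisSaric}. But two of your steps, as written, have real gaps. First, the passage from $\mathrm{mod}(\Gamma_\infty)=0$ to ``the nested half-planes collapse to a single boundary point'' is not an equivalence for a general flute: collapse of the lifts is equivalent to the covering group being of the first kind, which is necessary but \emph{not} sufficient for parabolicity. The equivalence you need holds here only because all twists lie in $\{0,1/2\}$, so $X$ admits an orientation-reversing isometry fixing the dividing geodesics; one must argue (as the paper does) that the curve family to the end in $X$ has zero modulus if and only if the corresponding family in the planar front half $\tilde X^*\subset\mathbb{H}$ does, and that the latter is controlled by whether the closure of $\tilde X^*$ meets $\hat{\mathbb{R}}$ in one point or an interval. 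Asserting the equivalence ``via the shear-based characterization'' skips the step that makes the whole reduction legitimate.

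Second, the shear computation is misdescribed in a way that matters. The criterion of \cite{Saric10} applies to a chain of geodesics in which \emph{consecutive members share an ideal endpoint}; the cuff lifts $\tilde\alpha_n$ are disjoint and do not share endpoints, so one must interpolate auxiliary geodesics $g_{2n}$ joining the initial point of $g_{2n-1}$ to the terminal point of $g_{2n+1}$, and a substantial part of the proof is computing the shears of these auxiliary geodesics via the Lambert-quadrilateral identity $\ell(\eta_n)=\sinh^{-1}(1/\sinh(\ell_n/2))+\sinh^{-1}(1/\sinh(\ell_{n+1}/2))$, giving $e^{s(g_{2n})}=\sinh^2(\ell(\eta_n)/2)$. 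Moreover, no individual shear is ``comparable to $e^{-\sigma_k/2}$'': the odd shears are roughly $\pm\ell_n/2$ plus large logarithmic corrections in $\ell(\eta_{n-1}),\ell(\eta_n)$, and the quantity bounded below by $e^{-\sigma_k/2}$ is the length $e^{\pm(s_1+\cdots+s_n)}$ of the $n$-th horocyclic segment, i.e.\ a telescoping \emph{product} over all shears up to index $n$ in which the positive logarithmic terms from the odd shears cancel against the negative ones from the even shears, leaving $\frac{\ell(\eta_n)}{\ell(\eta_1)}e^{a_1+\cdots+a_n}$ with $a_m=\pm\ell_m/2$ exactly at the half-twist indices. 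Your appeal to ``collar-width estimates'' and ``monotonicity'' to control the zero-twist steps does not substitute for this cancellation; without the explicit pentagon/quadrilateral identities and the telescoping, you cannot rule out that the zero-twist shears (which are unbounded) destroy the estimate. The architecture of your argument is the right one, but these two steps are where the actual content of the proof lives.
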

	
	\begin{proof}
		Let $\Delta_1$ and $\Delta_2$ be ideal geodesic triangles in the upper half-plane $\mathbb{H}$ with disjoint interiors and the common boundary side geodesic $g$. We choose the orientation of the geodesic $g$ such that $\Delta_1$ is on its left side. 
		Consider the orthogonal projections $p_1$ and $p_2$ of the third vertices of $\Delta_1$ and $\Delta_2$ to the common geodesic $g$. 
		The {\it shear} $s(g)$ on the geodesic $g$ of the pair ($\Delta_1$, $\Delta_2$) is the signed hyperbolic distance (with respect to the orientation of $g$) from $p_1$ to $p_2$  (see \cite{Penner}, \cite{Saric10}). Note that the shear $s(g)$ is independent of the orientation of $g$.

		The symmetric flute surface $X$ can be divided into the front and back sides by geodesics connecting the punctures (see Figure \ref{fig:half-zero flute}). The dividing geodesics partition each pair of pants (except the first one) into two isometric regions: the front and the back pentagons with four right angles and one zero angle. There is an orientation reversing isometry of $X$, which pointwise fixes the dividing geodesics and maps each front pentagon to the corresponding back pentagon. The front side $X^*$ of $X$ is planar, and we fix a single lift of the front side $X^*$ to the universal covering $\mathbb{H}$. The lift $\tilde{X}^*$ is an infinite ideal polygon (see Figure \ref{fig:zero and half twist flute lift}), and the covering map is a conformal map of the polygon onto $X^*$.
		Let $\{g_{2n-1}\}$ be the (infinite) geodesics in $\mathbb{H}$ that are lifts of the cuffs $\alpha_n$ and intersect $\tilde{X}^*$, and $\{g_{2n}\}$ the geodesics which share the initial endpoint with $g_{2n-1}$ and the terminal endpoint with $g_{2n+1}$.

		By the front-to-back symmetry of $X$, it follows that the family of curves that connects a finite area subsurface of $X$ with the non-simple topological end (in the case of flute surface, the non-simple topological end is accumulated by punctures) has zero modulus if and only if the curve family connecting a finite area sub-polygon of $\tilde{X}^*$ to the boundary at infinity not corresponding to punctures has zero modulus. Indeed, the orientation-reversing conformal map of $X$, which maps $X^*$ to the back side of $X$, transfers allowable metrics for $X$ to allowable metrics for $X^*$ with a multiplicative constant $2$. Therefore, one family has zero modulus if and only if the other family has zero modulus. The conformal image $\tilde{X}^*$ of $X^*$ is an ideal polygon in $\mathbb{H}$ with countably many ideal vertices corresponding to the punctures. The closure of 
		$\tilde{X}^*$ in $\mathbb{H}\cup\hat{\mathbb{R}}$ in addition to its vertices contains either a single point in 
		$\hat{\mathbb{R}}=\mathbb{R}\cup\{\infty\}$ (which is the accumulation of the vertices) or an interval. If it is a point, then the modulus of the above family is zero, and $X$ is parabolic. Hence, the covering group is of the first kind. If the closure contains an interval, then $X$ is not parabolic, and the covering group is of the second kind. Thus, the symmetric flute surface is parabolic if and only if its covering group is of the first kind (for more details, see \cite{PandazisSaric}).

		Therefore, the only thing we need to prove is that the infinite polygon $\tilde{X}^{\star}$ accumulates to one point on $\hat{\mathbb{R}}$ in addition to its ideal vertices. The fronts of the geodesic boundaries $\alpha_n$ of $X$ lift to the curves $\tilde{\alpha}_n$ in the universal cover $\mathbb{H}$. These lifts $\tilde{\alpha}_n$ lay on corresponding geodesics we call $g_{2n-1}$ (see Figure \ref{fig:zero and half twist flute lift}). Then define the geodesics $g_{2n}$ to connect the initial point of $g_{2n-1}$ and the terminal point of $g_{2n+1}$. The result is a nested sequence of geodesics $g_n$ such that $g_n$ and $g_{n+1}$ share an endpoint and no three geodesics have a common endpoint. We compute the shears $s(g_n)$ for $n \ge 2$ of the geodesics $g_n$.
		
		\begin{figure}[htb]
			\begin{picture}(100,200)
				\put(-30,15){$g_1$}
				\put(-6,110){$g_2$}
				\put(-14,6){$g_3$}
				\put(35,100){$g_4$}
				\put(-2,0){$g_5$}
				\put(48,70){$g_6$}
				\put(54,-8){$g_7$}
				\put(80,98){$g_8$}
				\put(68,-4){$g_9$}
				\put(99,82){$g_{10}$}
				\put(79,-2){$g_{11}$}
				\put(-55,0){\includegraphics[width = 2.8in, height = 2.8in]{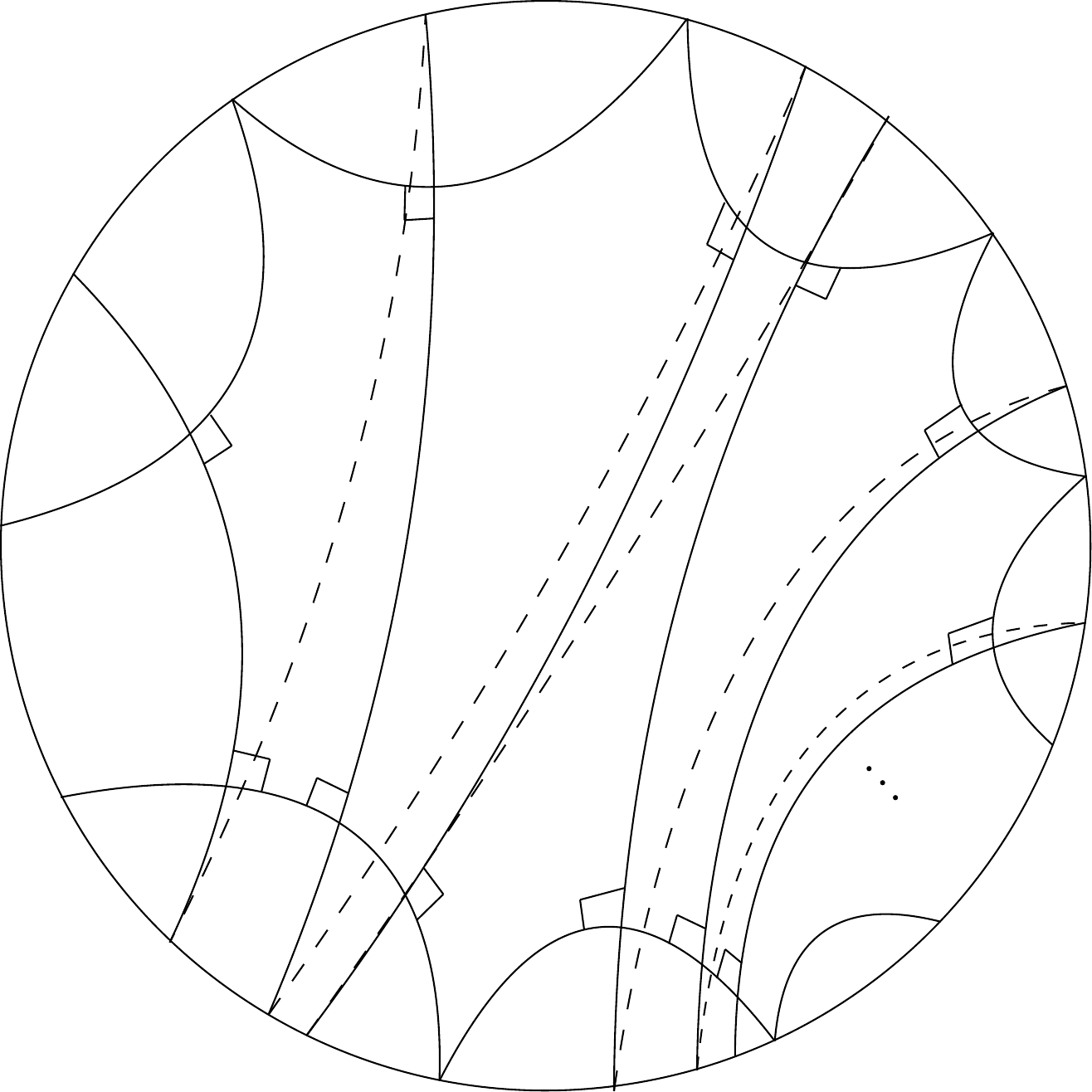}}
			\end{picture}
			\caption{Isometric lift of the front of $X$ to $\mathbb{D}$.}
			\label{fig:zero and half twist flute lift}
		\end{figure}
		
		The orthogeodesic arc $\eta_n$ from $g_{2n-1}$ to $g_{2n+1}$ is on the lift of the front side $\tilde{X}^{\star}$ (see Figure \ref{fig:zero and half twist flute lift}). The $\eta_n$, $\tilde{\alpha}_{2n - 1}$, and $\tilde{\alpha}_{2n + 1}$ make up three sides of a geodesic pentagon in $\tilde{X}^{\star}$ that has four right angles and one zero angle. The orthogeodesic ray from the vertex of any of these pentagons with a zero angle to $\eta_n$ separates it into two Lambert quadrilaterals. For each such pentagon, the sides on $g_{2n-1}$ and $g_{2n+1}$ have lengths $\frac{\ell_{n}}{2}$ and $\frac{\ell_{n+1}}{2}$. Using a hyperbolic trigonometry formula for Lambert quadrilaterals \cite[page 38, Theorem 2.3.1(i)]{Buser} gives
		$$
		\ell (\eta_n)=\sinh^{-1}\Big{(}\frac{1}{\sinh \frac{\ell_n}{2}}\Big{)}+\sinh^{-1}\Big{(}\frac{1}{\sinh \frac{\ell_{n+1}}{2}}\Big{)}.
		$$
		It follows that for large $n$,
		\begin{equation}
			\label{eq:eta_n}
			e^{-\frac{\ell_{n+1}}{2}} \lesssim \ell (\eta_n) \lesssim e^{-\frac{\ell_n}{2}}.
		\end{equation}

		We define the {\it cross-ratio} of a quadruple of points $(a,b,c,d)$ in $\hat{\mathbb{R}}$ by
		$$
		cr(a,b,c,d)=\frac{(b-a)(d-c)}{(b-c)(d-a)}.
		$$
		If $g$ is the geodesic with endpoints $(a,c)$, and if $\Delta_1$ is the ideal triangle with vertices $\{ a,c,d\}$ and $\Delta_2$ is the ideal triangle with vertices $\{ a,b,c\}$, then the shear along $e$ with respect to $\Delta_1$ and $\Delta_2$ is (see \cite{SWW}))
		$$
		s(g)=\log cr(a,b,c,d).
		$$
		Use a M\"obius map to map the quadruple of points $(a,b,c,d)$ onto $(-R,-r,r,R)$ for some $0<r<R$. Then the distance $\rho$ between the geodesic $|z|=r$ and $|z|=R$ is given by
		$$
		\rho =\log \frac{R}{r}.
		$$
		On the other hand, we have
		$$
		e^{s(g)}=cr(-R,-r,r,R)=\frac{(R-r)^2}{4rR}.
		$$
		The above gives
		$$
		e^{s(g)}=\sinh^2\frac{\rho}{2}.
		$$
		Since $\eta_n\to 0$ as $n\to\infty$, it follows that
		$s(g_{2n}) < 0$ for large enough $n$ and 
		\begin{equation}
			\label{eq:even shears}
			s(g_{2n})=\log\sinh^2\frac{\ell (\eta_n)}{2}.
		\end{equation}
		
		We orient all geodesics $\{ g_n\}$ to the left as seen from $g_1$. 
		Consider $g_{2n-1}$ that contains the lift of a cuff $\alpha_n$ such that $n_k = n$ for some even $k$ and the corresponding quadrilateral as in Figure \ref{fig:half twist even index shear}. Let $A$ be the initial point of $g_{2n-3}$, and $D$ the terminal point of $g_{2n+1}$. Call $P$ the foot of the orthogedoesic from point $A$ to the geodesic $g_{2n-1}$ and $S$ the foot of the orthogeodesic from point $D$ to $g_{2n-1}$. Let $B\in g_{2n-3}$ and $Q \in g_{2n-1}$ be the endpoints of orthogeodesic $\eta_{n-1}$. Call $R \in g_{2n-1}$ and $C \in g_{2n+1}$ the endpoints of $\eta_{n}$ (see Figure \ref{fig:half twist even index shear}). Note that $\eta_{n-1}$ and $\eta_n$ belong to the boundary sides of the polygon $\tilde{X}^*$, and that the points $(R,P,S,Q)$ appear in that order for the orientation of $g_{2n-1}$. 
		
		The choice of the half-twists and because the index of $g_i$ has remainder $1$ under division by $2$ guarantees that the arc $PS$ is contained in arc $RQ$ (see Figures \ref{fig:zero and half twist flute lift} and \ref{fig:half twist even index shear}). Note that $s(g_{2n-1})$ is the signed distance from $P$ to $S$ for the orientation of $g_{2n-1}$. 
		If $P$ comes before $S$ for the orientation of $g_{2n-1}$, then $s(g_{2n-1})=\ell (PS)$ and 
		$$
		\ell (QR)=\ell (QP)+\ell (RS)-\ell (PS),
		$$
		where $\ell (\cdot )$ is the positive distance. If $P$ comes after $S$, then $s(g_{2n-1})=-\ell (PS)$ and
		$$
		\ell (QR)=\ell (QP)+\ell (RS)+\ell (PS).
		$$
		By the above two equalities and $\ell (QR)=\ell_n/2$, we obtain
		\begin{equation}
			\label{eq: odd shears even index}
			s(g_{2n-1}) = \sinh^{-1}\frac{1}{\sinh \ell(\eta_{n-1})} + \sinh^{-1}\frac{1}{\sinh \ell(\eta_{n})} - \frac{\ell_n}{2}.
		\end{equation}
		$ \newline$
		\begin{figure}[htb]
			\begin{picture}(350,250)
				\put(40,0){\includegraphics[width=3.5in,height=3.5in]{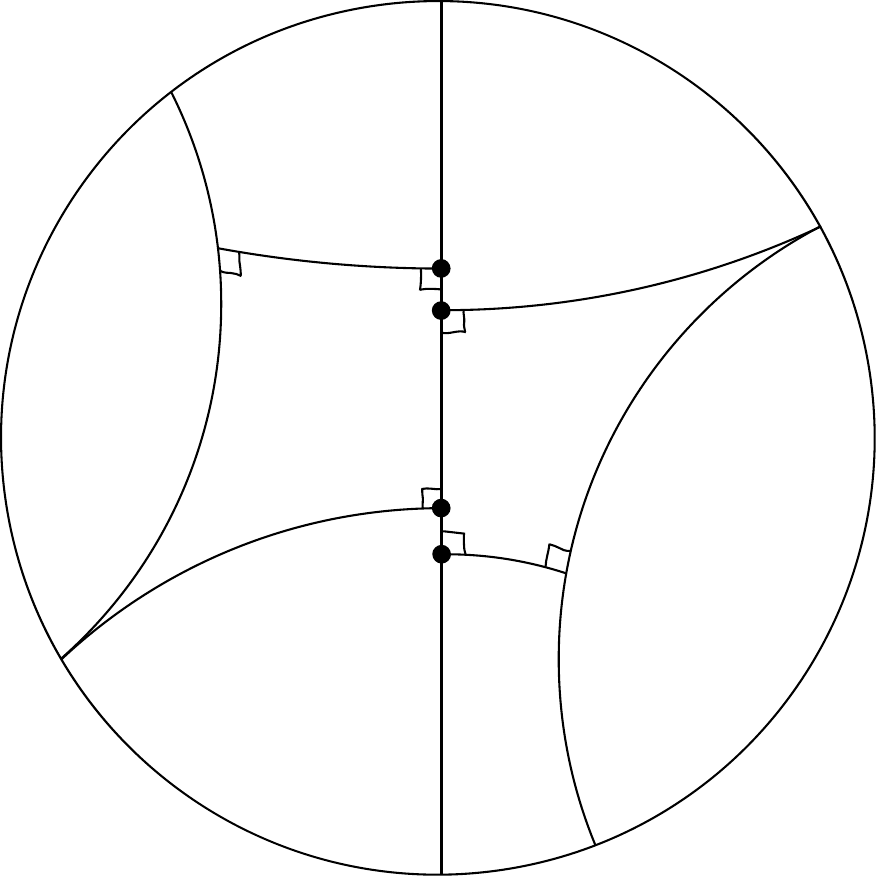}}
				\put(88,178){$B$}
				\put(172,178){$Q$}
				\put(155,156){$S$}
				\put(172,106){$P$}
				\put(152,88){$R$}
				\put(210,80){$C$}
				\put(47,50){$A$}
				\put(282,187){$D$}
				\put(170,20){$g_{2n-1}$}
			\end{picture}
			\caption{$s(g_{2n-1})=\ell (PQ)+\ell (RS)-\frac{\ell_{n}}{2}$ if $t_n = \frac{1}{2}$ and $n_k = n$ for some even $k$.}
			\label{fig:half twist even index shear}
		\end{figure}
		\indent Consider $g_{2n-1}$ that contains the lift of a cuff $\alpha_n$ such that $n_k = n$ for some odd $k$ and the corresponding quadrilateral in Figure \ref{fig:half twist odd index shear}. Call the vertices of the quadrilateral not on geodesic $g_{2n-1}$, points $A$ and $D$ to the left and right, respectively. Let $P$ be the foot of the orthogeodesic from point $A$ to geodesic $g_{2n-1}$, and let $S$ be the foot of the orthogeodesic from point $D$ to the same geodesic. Call $B$ and $Q$ the endpoints of the orthogeodesic $\eta_{n-1}$ from $g_{2n-3}$ to $g_{2n-1}$. Then call $R$ and $C$ the endpoints of the orthogeodesic $\eta_{n}$ from $g_{2n-1}$ to $g_{2n+1}$ (see Figure \ref{fig:half twist odd index shear}).
		
		The choice of the half-twists and because the index of $g_i$ has remainder $1$ under division by $2$ guarantees that the arc $QR$ is contained in arc $PS$ (see Figures \ref{fig:zero and half twist flute lift} and \ref{fig:half twist odd index shear}). From Figure \ref{fig:half twist odd index shear}, analogous to the case above, we obtain
		\begin{equation}
			\label{eq: odd shears odd index}
			s(g_{2n-1}) = \sinh^{-1}\frac{1}{\sinh \ell(\eta_{n-1})} + \sinh^{-1}\frac{1}{\sinh \ell(\eta_{n})} + \frac{\ell_n}{2}.
		\end{equation}
		
		Consider $g_{2n-1}$ that contains the lift of a cuff $\alpha_n$ such that $t_n = 0$ and the corresponding quadrilateral in Figure \ref{fig:zero twist shear}. Call the vertices of the quadrilateral not on geodesic $g_{2n-1}$, points $A$ and $D$ to the left and right, respectively. Let $P$ be the foot of the orthogeodesic from point $A$ to geodesic $g_{2n-1}$, and let $R$ be the foot of the orthogeodesic from point $D$ to the same geodesic. Call $B$ and $Q$ the endpoints of the orthogeodesic $\eta_{n-1}$ from $g_{2n-3}$ to $g_{2n-1}$. Then call $C$ the endpoint on $g_{2n+1}$ of the orthogeodesic $\eta_{n}$ from $g_{2n-1}$ to $g_{2n+1}$ (see Figure \ref{fig:zero twist shear}).
		$ \newline$
		\begin{figure}[htb]
			\begin{picture}(320,250)
				\put(40,0)
				{\includegraphics[width = 3.5in, height = 3.5in]{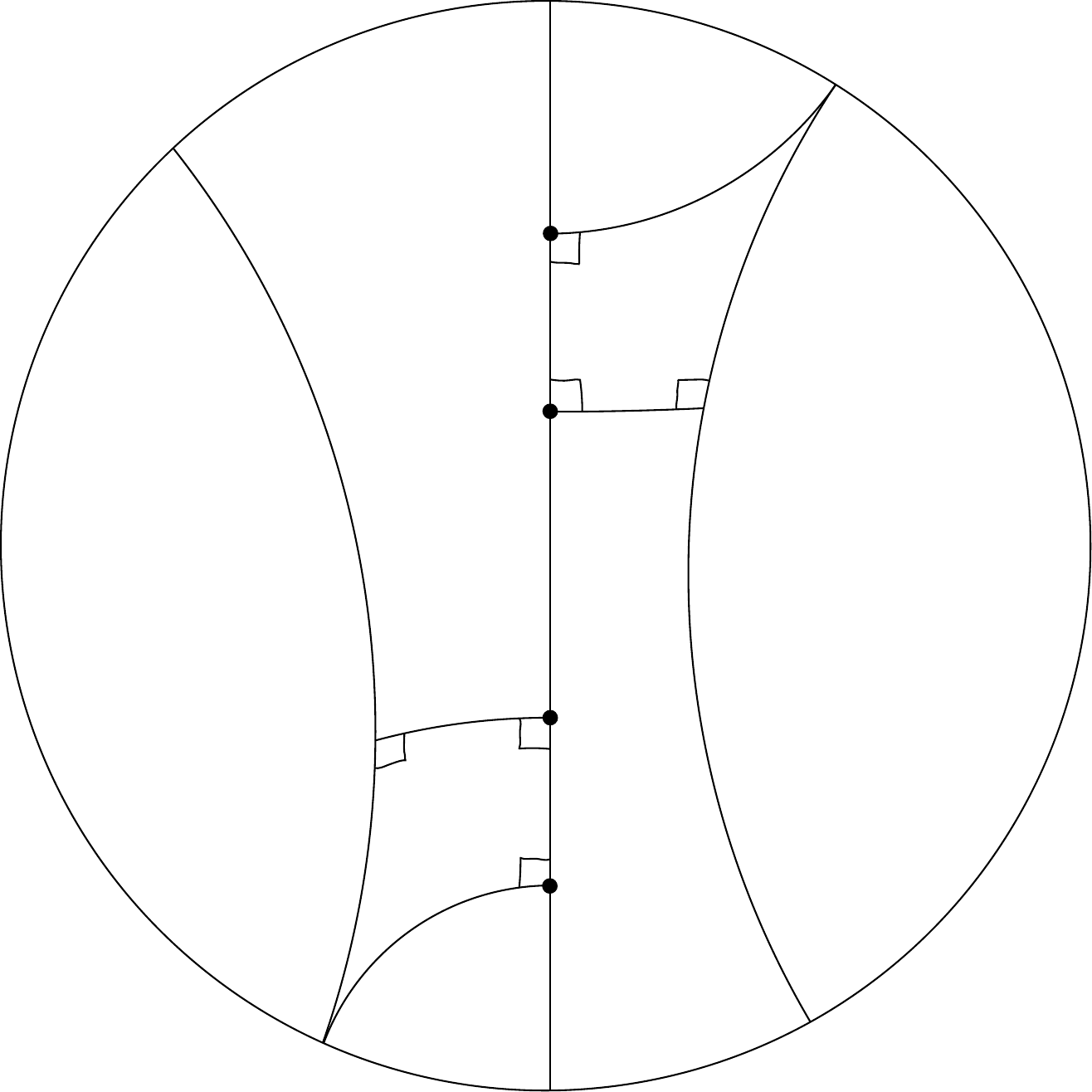}}
				\put(157,195){$S$}
				\put(155,155){$R$}
				\put(206,150){$C$}
				\put(237,235){$D$}
				\put(172,82){$Q$}
				\put(172,35){$P$}
				\put(114,78){$B$}
				\put(103,0){$A$}
				\put(90,125){$g_{2n-3}$}
				\put(172,25){$g_{2n-1}$}
				\put(206,100){$g_{2(n+1)-1}$}
			\end{picture}
			\caption{$s(g_{2n-1}) = \sinh^{-1}\frac{1}{\sinh \ell(\eta_{n-1})} + \sinh^{-1}\frac{1}{\sinh \ell(\eta_{n})} + \frac{\ell_n}{2}$ if $t_n = \frac{1}{2}$ and $n_k = n$ for some odd $k$.}
			\label{fig:half twist odd index shear}
		\end{figure}
		
		From Figure \ref{fig:zero twist shear} we obtain
		\begin{equation}
			\label{eq: odd shears zero twist}
			s(g_{2n-1}) = \sinh^{-1}\frac{1}{\sinh \ell(\eta_{n-1})} + \sinh^{-1}\frac{1}{\sinh \ell(\eta_{n})}.
		\end{equation}
		
		It will be enough to prove that the sequence of nested geodesics $\{g_n\}_{n=1}^{\infty}$ (see Figure \ref{fig:zero and half twist flute lift}) does not accumulate in $\mathbb{H}$. It is immediate that $\sum_{n=1}^{\infty} \ell(\eta_n) = \infty$ implies that $\tilde{X}^{\star}$ has only one point of accumulation on $\hat{\mathbb{R}}$ in addition to its vertices. Therefore we assume that $\sum_{n=1}^{\infty} \ell(\eta_n) < \infty$ in the rest of the proof. This implies that
		\begin{equation}
			\label{eq:prod-finite}
			1 \le \prod_{n=1}^{\infty} (1 + \ell(\eta_n)) < \infty.
		\end{equation}
		
		By \cite{Saric10}, the sequence $\{g_n\}_{n=1}^{\infty}$ does not accumulate in $\mathbb{H}$ if and only if the piecewise horocyclic path obtained by concatenating the horocyclic arcs between the adjacent geodesics $g_{n-1}$ and $g_n$ has infinite length (see also \cite[Proposition A.1]{PandazisSaric}). Denote
		by $s_n = s(g_n)$ the shear of $g_n$ with respect to the ideal quadrilateral whose vertices are the ideal endpoints of $g_{n-1}$ and $g_{n+1}$ for $n \ge 2$. We do not define
		the shear of $g_1$. We start the piecewise horocyclic path on $g_1$ such that the
		part in the wedge between $g_1$ and $g_2$ has length $1/e^{s_1}$. By \cite{Saric10} and \cite[Proposition A.3]{PandazisSaric},
		the length of the part of the piecewise horocyclic path $h_n$ between $g_n$ and $g_{n+1}$, denoted by $\ell(h_n)$, is
		$ \newline$
		\begin{figure}[htb]
			\begin{picture}(320,250)
				\put(40,0)
				{\includegraphics[width = 3.5in, height = 3.5in]{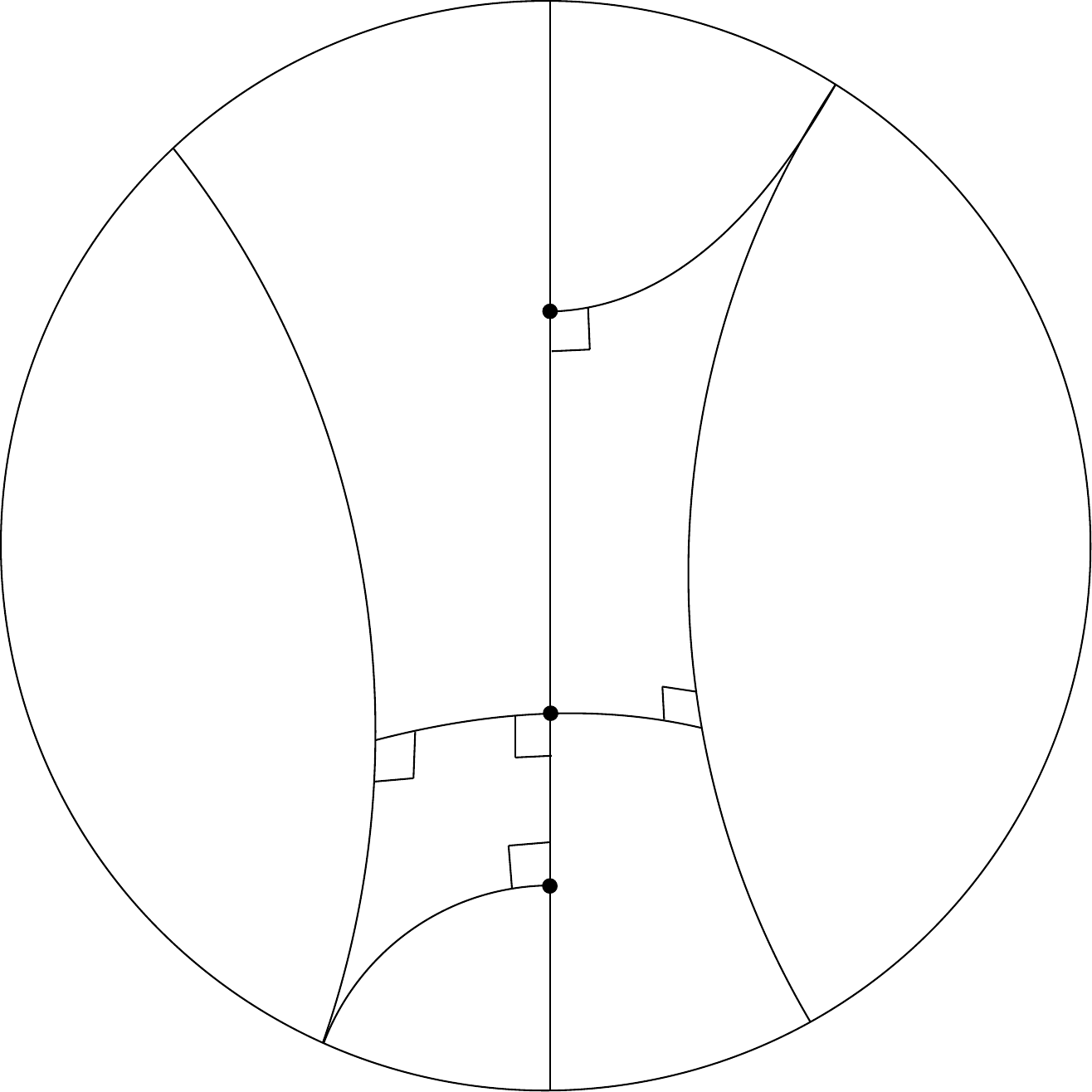}}
				\put(153,180){$R$}
				\put(207,80){$C$}
				\put(237,235){$D$}
				\put(172,92){$Q$}
				\put(172,45){$P$}
				\put(114,78){$B$}
				\put(103,0){$A$}
				\put(90,125){$g_{2n-3}$}
				\put(172,25){$g_{2n-1}$}
				\put(206,100){$g_{2(n+1)-1}$}
			\end{picture}
			\caption{$s(g_{2n-1}) = \sinh^{-1}\frac{1}{\sinh \ell(\eta_{n-1})} + \sinh^{-1}\frac{1}{\sinh \ell(\eta_{n})}$ if $t_n = 0$.}
			\label{fig:zero twist shear}
		\end{figure}
		\begin{align}
			\ell(h_n)=\begin{cases}
				e^{-s_1-s_2-\ldots-s_n}, & \mbox{ if } n \ is \ odd,\\
				e^{s_1+s_2+\ldots+s_n}, & \mbox{if } \ n \ is \ even.
			\end{cases}
		\end{align}
		We will use, for $x>0$, the inequalities $e^{\sinh^{-1}\frac{1}{\sinh x}}>\frac{2}{x}$
		and $\sinh x > x$ to get a lower estimate of the length of the piecewise horocyclic path $\ell (h)$.
		
		We first estimate $\ell (h_{2n})$. By the above, we have
		$$
		\ell (h_{2n})=e^{s_{2n}+\cdots +s_{1}}
		$$
		and we partition the sum of the first $2n$ shears into $n$ consecutive groups $s_{2j}+s_{2j-1}$ for $j=1,\ldots ,n$. For $x>0$, we have the inequalities $e^{\sinh^{-1}\frac{1}{\sinh x}}>\frac{2}{x}$
		and $\sinh x > x$.
		Together with (\ref{eq:even shears})\text{--}(\ref{eq: odd shears zero twist}), this gives 
		$$
		e^{s_{2n}}=\sinh^2\frac{\ell (\eta_n)}{2}>\frac{[\ell (\eta_n)]^2}{4}
		$$
		and
		$$
		e^{s_{2n-1}}>\frac{2}{\ell (\eta_{n-1})}\frac{2}{\ell (\eta_n)}e^{a_n},
		$$
		where 
		\begin{equation*}
			a_n=\left\{
			\begin{array}l
				\ \ \ \ \ \ 0,\ \mathrm{if}\ n\neq n_k\\
				\ \ {\ell_n/2},\ \mathrm{if}\ n=n_k,\ k\ \mathrm{odd}\\
				-{\ell_n/2},\ \mathrm{if}\ n=n_k,\ k\ \mathrm{even}
			\end{array}
			\right.
		\end{equation*}
		
		Then we have
		$$
		e^{s_{2n}+s_{2n-1}}>\frac{\ell (\eta_n)}{\ell (\eta_{n-1})}e^{a_n}
		$$
		which gives
		\begin{equation*}
			e^{s_{2n} + \ldots + s_1}  >\frac{\ell (\eta_n)}{\ell (\eta_1)}e^{a_n+\cdots +a_1}.
		\end{equation*}
		
		By summing over all $n=n_k$ with $k$ odd and using the inequality $\ell(\eta_{n}) > e^{-\frac{\ell_{n}}{2}}$, we get
		\begin{equation}
			\label{eq:sum odd n}
			\sum_{odd \ k} e^{s_{2n_{k}} + ... + s_1}> C\sum_{odd \ k} e^{-\frac{\sigma_{k-1}}{2}}.
		\end{equation}
		
		By (\ref{eq:even shears})\text{--}(\ref{eq: odd shears zero twist}), the inequalities $e^{-\sinh^{-1}\frac{1}{\sinh x}}> \frac{x}{5}$ and $\frac{e^{-\sinh^{-1}\frac{1}{\sinh x}}}{\sinh \frac{x}{2}}>\frac{1}{1+x}$ for small $x>0$, and $\ell(\eta_{n_k}) > e^{-\frac{\ell_{n_k + 1}}{2}} \ge e^{-\frac{\ell_{n_{k+1}}}{2}}$ using an argument similar to the above, we obtain for even $k$
		\begin{equation}
			\label{eq:sum even n}
			\sum_{even \ k} e^{-s_{2n_{k}-1} - ... - s_1}> C\sum_{even \ k} e^{-\frac{\sigma_{k+1}}{2}}.
		\end{equation}
		By summing (\ref{eq:sum odd n}) and (\ref{eq:sum even n}) we obtain {for some constant $C>0$} that the piecewise horocyclic path has length $\ell (h)$ greater than
		$$
		C\sum_{k=1}^{\infty} e^{-\frac{\sigma_{k}}{2}}
		$$
		and the assumption of the theorem implies that it is of infinite length. Thus $\tilde{X}^{*}$ accumulates to exactly one point in addition to its vertices. This implies that $X$ is parabolic.
	\end{proof}

	\begin{figure}[htb]
		\centering
		\includegraphics[width=4.5in,height=2in,angle=0]{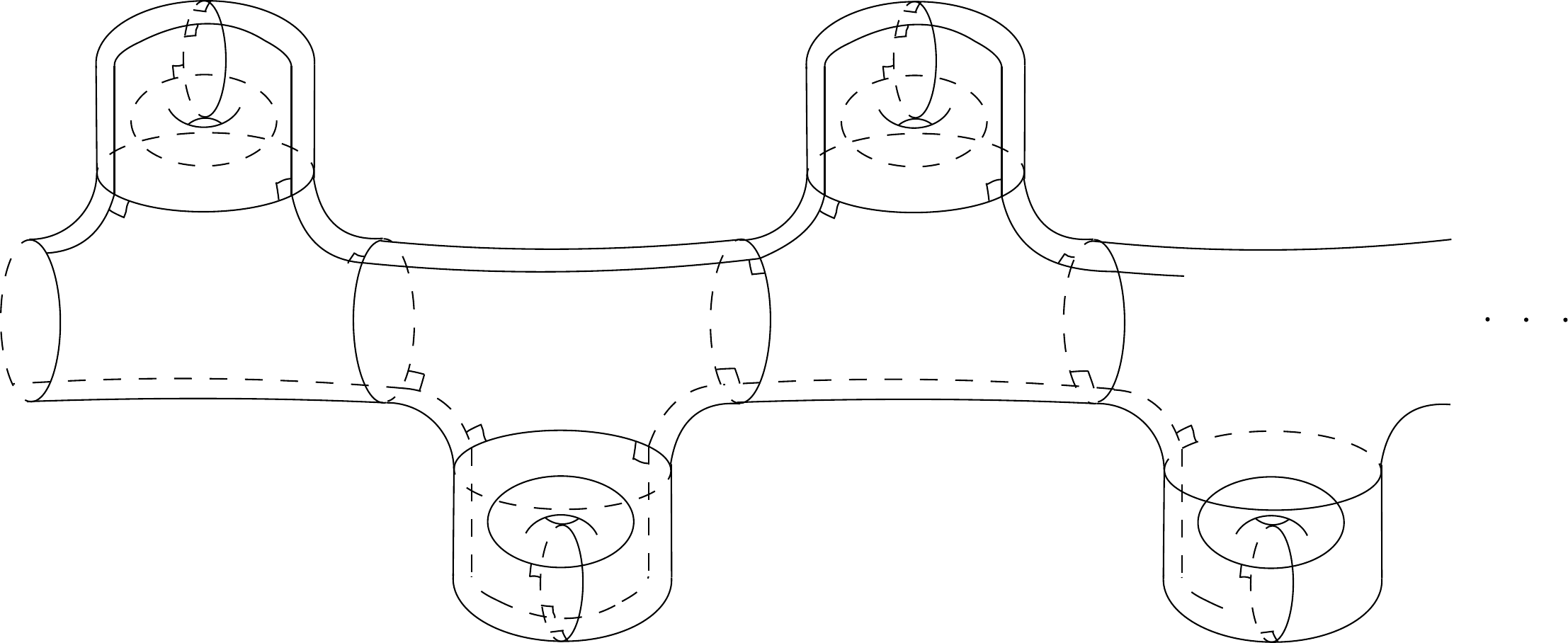}
		\caption{A symmetric end surface with half twists.}
		\label{fig:symm end surface}
	\end{figure}

	Theorem \ref{thm:parabolicity half-twists} implies that Kahn-Markovi\'c conjecture holds for every $\{\ell_n\}$ satisfying \eqref{parabolic:alternating}. In particular, we have the following.
	
	
	\begin{corollary}\label{cor:KM1}
		Suppose $\{\ell_n\}$ is a non-decreasing sequence which has a subsequence  $\{\ell_{n_k}\}$ s.t. for $k\geq1$ we have
		\begin{align}\label{simple-sequence}
			\ell_{n_{2k-1}}=\ell_{n_{2k}}.
		\end{align} 
		Then for the twists $\{t_n\}$ given by (\ref{twists}) the flute surface $X(\{\ell_n\},\{t_n\})$
		is parabolic.
	\end{corollary}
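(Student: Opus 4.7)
The plan is to reduce the corollary to a direct application of Theorem \ref{thm:parabolicity half-twists}, which asserts parabolicity of $X(\{\ell_n\},\{t_n\})$ whenever
\[
\sum_{k=1}^{\infty} e^{-\sigma_k/2}=\infty,\qquad \sigma_{k}=\ell_{n_k}-\ell_{n_{k-1}}+\cdots+(-1)^{k-1}\ell_{n_1}.
\]
So the whole corollary comes down to verifying this divergence condition under the hypothesis $\ell_{n_{2k-1}}=\ell_{n_{2k}}$ for every $k\geq 1$.

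I would compute $\sigma_k$ for even $k$. Writing $k=2m$ and grouping the alternating sum into consecutive pairs, we get
\[
\sigma_{2m}=\bigl(\ell_{n_{2m}}-\ell_{n_{2m-1}}\bigr)+\bigl(\ell_{n_{2m-2}}-\ell_{n_{2m-3}}\bigr)+\cdots+\bigl(\ell_{n_2}-\ell_{n_1}\bigr).
\]
Each parenthesized pair vanishes by the assumption \eqref{simple-sequence}, so $\sigma_{2m}=0$ for every $m\geq 1$. (For the odd indices $k=2m+1$ an analogous regrouping would give $\sigma_{2m+1}=\ell_{n_{2m+1}}$, but this is not needed.)

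Consequently
\[
\sum_{k=1}^{\infty} e^{-\sigma_k/2}\;\geq\;\sum_{m=1}^{\infty} e^{-\sigma_{2m}/2}\;=\;\sum_{m=1}^{\infty} 1\;=\;\infty,
\]
so \eqref{parabolic:alternating} holds and Theorem \ref{thm:parabolicity half-twists} directly yields $X(\{\ell_n\},\{t_n\})\in O_G$. There is no real obstacle here; the only subtlety is the bookkeeping showing that the hypothesis forces telescoping of $\sigma_{2m}$, after which the conclusion is immediate from the already-established sufficient condition for parabolicity. The point of the corollary is therefore geometric rather than technical: it isolates a simple combinatorial pattern on the lengths (pairs of equal cuffs) that, coupled with the alternating placement of half-twists described by \eqref{twists}, always produces a parabolic symmetric flute, independently of how fast $\ell_n\to\infty$.
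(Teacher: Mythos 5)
Your proof is correct and follows exactly the paper's own argument: both verify that the pairing hypothesis forces $\sigma_{2m}=0$, so the even-indexed terms of $\sum_k e^{-\sigma_k/2}$ each equal $1$ and the series diverges, whence Theorem \ref{thm:parabolicity half-twists} applies. The only difference is cosmetic (you spell out the telescoping and the unneeded odd-index computation), so there is nothing to add.
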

	\begin{proof}
		By \eqref{simple-sequence} we have $\sigma_{2k}=\ell_{n_{2k}}-\ell_{n_{2k-1}}+\ldots+\ell_{n_{2}}-\ell_{n_{1}}=0$,
		and $e^{-\sigma_{2k}} = 1$ for $k\geq 1$. Hence \eqref{parabolic:alternating} holds and $X(\{\ell_n\},\{t_n\})\in O_G$.
	\end{proof}

	\section{Loch-Ness monster surfaces and basic end surfaces}

	In this section, we consider the Loch-Ness monster Riemann surface $X$ with cuffs $\alpha_n$ converging to the infinite end such that the subsurface $Y_n\subset X$ with boundaries $\alpha_n$ and $\alpha_{n+1}$ has genus one. Let $\beta_n$ be the cuff in  $Y_n$ that cuts off the genus of $Y_n$. In other words, $\alpha_n$, $\alpha_{n+1}$, and $\beta_n$ are on the boundary of a pair of pants.
	
	We can form the Loch-Ness monster surface by starting from the bordered surface $X_0$, which has one end at infinity, a sequence of cuffs $\alpha_n$ converging to infinity, and between any two cuffs $\alpha_n$ and $\alpha_{n+1}$ there is a closed boundary geodesic $\beta_n$. Then, to $X_0$, we can attach a genus one surface to each $\beta_n$. The surface $X_0$ can have a puncture instead of $\beta_n$ at arbitrary places. We call this surface $X_0$ the {\it basic end surface}. 
	
	It is also possible to attach a higher genus surface to a border $\beta_n$ of $X_0$ and even an infinite surface. In fact, in Section 1 we associated to each topological end $e$ of $X$ a basic end surface $X_e$.
	
	\begin{theorem}
		\label{thm:basic_end}
		Let $X_0$ be a basic end surface with cuffs $\{\alpha_n\}_n$ converging to the end with increasing lengths $\ell_n$ and border closed geodesics $\{\beta_n\}_n$ with lengths bounded above, where some $\beta_n$ can be punctures. Assume that the twists of $X_0$ around the cuffs $\alpha_n$ with respect to the orthogeodesics from $\beta_n$ are in $\{ 0,1/2\}$ with infinitely many twists $t_{n_k}=1/2$. Then the modulus of the curve family in $X_0$ connecting $\alpha_1$ to the point at infinity is zero under the condition that
		\begin{align}\label{parabolic:alternating}
			\sum_{k=1}^{\infty} e^{-\sigma_k/2}=\infty,
		\end{align}
		where $\sigma_{k}=\ell_{n_k}-\ell_{n_{k-1}}+\ldots+(-1)^{k-1} \ell_1$.
	\end{theorem}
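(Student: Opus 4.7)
The plan is to reproduce the argument of Theorem \ref{thm:parabolicity half-twists} with the pentagonal front pieces of the flute case replaced by right-angled hexagons carrying an extra boundary side of length $\ell(\beta_n)/2\leq M/2$. First I would apply the same front-back symmetry as in the flute argument: after cutting $X_0$ along the $\beta_n$'s to remove any genus (or other bordered subsurface) attached there, the resulting planar bordered surface admits an orientation-reversing involution pointwise fixing each orthogeodesic from $\beta_n$ to the adjacent cuffs $\alpha_n$ and $\alpha_{n+1}$. As in the proof of Theorem \ref{thm:parabolicity half-twists}, the vanishing of the modulus of the curve family from $\alpha_1$ to the end is then equivalent to the conformal lift $\tilde X_0^*$ of the front accumulating to a single boundary point of $\hat{\mathbb{R}}$, which in turn is equivalent to the nested sequence of geodesic lifts $g_{2n-1}$ of the $\alpha_n$ not accumulating inside $\mathbb{H}$.

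The only genuinely new geometric input is the behaviour of the orthogeodesic $\eta_n$ between consecutive lifts when the ideal vertex of the flute pentagon is replaced by a geodesic side of length $\ell(\beta_n)/2$. Applying the right-angled hexagon identity to the front hexagon with long sides $\ell_n/2, \ell_{n+1}/2$ and short side $\ell(\beta_n)/2$ gives
$$\cosh(\eta_n)=\frac{\cosh(\ell(\beta_n)/2)+\cosh(\ell_n/2)\cosh(\ell_{n+1}/2)}{\sinh(\ell_n/2)\sinh(\ell_{n+1}/2)},$$
and because $\cosh(\ell(\beta_n)/2)\leq \cosh(M/2)$ is uniformly bounded while $\ell_n\to\infty$, an elementary expansion produces
$$e^{-\ell_{n+1}/2}\lesssim \eta_n\lesssim e^{-\ell_n/2},$$
with implied constants depending only on $M$. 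These are exactly the bounds (\ref{eq:eta_n}) used in the flute argument; when $\beta_n$ happens to be a puncture one simply reverts to the Lambert-quadrilateral formula from the flute case.

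Labelling the lifts as in Figure \ref{fig:zero and half twist flute lift}, with $g_{2n-1}$ the lift of $\alpha_n$ and $g_{2n}$ the geodesic sharing endpoints with $g_{2n-1}$ and $g_{2n+1}$, I would next observe that the shear formulas (\ref{eq:even shears})--(\ref{eq: odd shears zero twist}) continue to hold verbatim in terms of $\ell(\eta_n)$ and $\ell_n$. Indeed, each formula is derived from a Lambert quadrilateral with three right angles and one ideal vertex (an endpoint at infinity of $g_{2n\pm 3}$), and this quadrilateral is insensitive to whether the compact region next to $g_{2n\pm 1}$ is bounded by a segment of $\tilde\beta_n$ or terminates at an ideal point. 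Feeding these shears into the piecewise horocyclic path characterization of \cite{Saric10} and \cite[Proposition A.1]{PandazisSaric}, and running the summations at the end of the proof of Theorem \ref{thm:parabolicity half-twists} verbatim, yields the lower bound $\ell(h)\gtrsim \sum_{k=1}^{\infty}e^{-\sigma_k/2}$, which diverges by hypothesis (\ref{parabolic:alternating}). Hence $\{g_n\}$ cannot accumulate in $\mathbb{H}$, and the modulus of the curve family connecting $\alpha_1$ to infinity vanishes.

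The principal obstacle is the combinatorial verification alluded to above: one must confirm that the points $P,Q,R,S$ of Figures \ref{fig:half twist even index shear}--\ref{fig:zero twist shear} and the inclusion pattern of the arcs $PS$ and $RQ$ (which controls the sign of $\pm \ell_n/2$ in the three cases $t_n=0$, $t_n=1/2$ with $k$ odd, and $t_n=1/2$ with $k$ even) are produced in the hexagonal lift by the same twist convention as in the flute lift. Since the half-twist at $\alpha_n$ rotates the two hexagonal pieces along $g_{2n-1}$ in exactly the same way as it rotated the pentagonal pieces, this reduces to a purely topological check on the universal cover. Beyond this, the uniform bound $\ell(\beta_n)\leq M$ is needed only in the $\eta_n$ estimate of the second step; once it is in place, the entire analytic machinery of the flute proof applies without modification.
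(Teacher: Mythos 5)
Your proposal is correct and takes essentially the same route as the paper: reduce by the front-to-back symmetry to the planar, simply connected front half, lift it to $\mathbb{H}$, form the same zig-zag family of geodesics, and note that since $\ell(\beta_n)\leq M$ the orthogeodesic distances $\ell(\eta_n)$ satisfy the same asymptotics as in the flute case, so the shear and piecewise-horocyclic-path estimates from the proof of Theorem \ref{thm:parabolicity half-twists} apply verbatim. Your right-angled hexagon computation of $\eta_n$ in fact supplies a detail that the paper's own (very brief) proof only asserts.
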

	
	\begin{proof}
		Since the surface $X_0$ is symmetric, it is enough to prove that the curve family in the front half $X^*_0$ has zero modulus. The front half $X^*_0$ is planar and simply connected (since the genus is cut off by $\beta_n$). Therefore $X^*_0$ has a conformal image $\tilde{X}^*_0$ in $\mathbb{H}$, which is an infinite polygon. It is enough to prove that the accumulation of $\tilde{X}^*_0$ on $\hat{\mathbb{R}}$ is a single point. This follows if the lifts of $\alpha_n$ do not accumulate in $\mathbb{H}$. We form the same zig-zag pattern as in the case of the flute surface by drawing extra geodesics between two lifts of the cuffs. The geodesics $\{g_n\}_n$ have asymptotically the same distance $\ell (\eta_n)$ as in the case of the flute surface because $\ell (\beta_n)$ is bounded above. Therefore, the accumulation set of $\tilde{X}^*_0$ is a single point on $\hat{\mathbb{R}}$. The modulus of the curve family is zero.
	\end{proof}
	
	The above theorem has an immediate corollary for the Loch-Ness monster surfaces.
	
	\begin{corollary}
		\label{cor:lochness}
		Let $X_0$ be a basic end surface as above with  $\ell_n=\ell (\alpha_n)$ increasing and closed boundary geodesics $\beta_n$ with $\ell (\beta_n)$ bounded above. Assume that the twists $t_n\in\{ 0,1/2\}$ with infinitely many twists $t_{n_k}=1/2$.
		Let $X$ be a Loch-Ness monster surface obtained by attaching to each $\beta_n$ an arbitrary surface with a finite area (and no boundary). Then
		$$
		X\in O_G
		$$ 
		provided that
		\begin{align}
			\label{parabolic:alternating}
			\sum_{k=1}^{\infty} e^{-\sigma_k/2}=\infty ,
		\end{align}
		where $\sigma_{k}=\ell_{n_k}-\ell_{n_{k-1}}+\ldots+( -1)^{k-1} \ell_1$.
	\end{corollary}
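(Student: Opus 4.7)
The plan is to deduce the corollary from Theorem~\ref{thm:basic_end}, which gives that the curve family $\Gamma_0$ in $X_0$ connecting $\alpha_1$ to the end $e$ has zero modulus under the summability hypothesis (\ref{parabolic:alternating}). Since $X$ is a Loch-Ness monster surface with a single topological end $e$ (the attached pieces $F_n$ being finite-area with no boundary other than $\beta_n$), Theorem~\ref{thm:mod-ch} reduces the corollary to showing $\mathrm{mod}(\Gamma_\infty^X) = 0$, where $\Gamma_\infty^X$ denotes the family of curves in $X\setminus X_1$ emanating from $\partial X_1 = \alpha_1$ and accumulating to $e$.

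The main step is the inequality $\mathrm{mod}(\Gamma_\infty^X) \leq \mathrm{mod}(\Gamma_0)$, which I would establish by a metric-extension argument. Given $\rho_0$ admissible for $\Gamma_0$ on $X_0$ with arbitrarily small $\int_{X_0} \rho_0^2$, extend $\rho_0$ to a metric $\rho_X$ on $X$ by setting $\rho_X = \rho_0$ on $X_0$ and placing a carefully chosen barrier on each attached piece $F_n$. The key geometric feature that permits this is that each $F_n$ is attached to $X_0$ only along the single curve $\beta_n$ of length at most $M$: any excursion of a curve $\gamma \in \Gamma_\infty^X$ into $F_n$ must both enter and exit $F_n$ through $\beta_n$, and can be ``bridged'' by an arc along $\beta_n$ of length at most $M/2$, producing a curve $\tilde\gamma$ in $X_0$ that lies in $\Gamma_0$ and therefore satisfies $\int_{\tilde\gamma}\rho_0 \geq 1$. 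The barrier on $F_n$ is then sized so that the $F_n$-portion of $\int_\gamma \rho_X$ compensates for the $\rho_0$-integral along the bridging arc on $\beta_n$, ensuring admissibility of $\rho_X$ for $\Gamma_\infty^X$.

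The main obstacle will be to choose the barriers on the $F_n$'s so that their total $L^2$-contribution remains comparable to $\int_{X_0}\rho_0^2$. Using the uniform bound $\ell(\beta_n) \leq M$ and the finite area of each $F_n$, one may concentrate the barrier on a thin collar of $\beta_n$ inside $F_n$ with amplitude calibrated by the average of $\rho_0$ along $\beta_n$, yielding an $L^2$-contribution from $F_n$ bounded by $C\int_{\beta_n}\rho_0^2$ for a uniform constant $C$ (a Cauchy-Schwarz bookkeeping). Summing over $n$ and invoking a boundary trace estimate (via the collar lemma) to control $\sum_n \int_{\beta_n}\rho_0^2$ by a fixed multiple of $\int_{X_0}\rho_0^2$, the $L^2$-norm of $\rho_X$ is bounded by a fixed multiple of that of $\rho_0$. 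This yields $\mathrm{mod}(\Gamma_\infty^X) \leq \mathrm{mod}(\Gamma_0) = 0$, and hence $X \in O_G$ by Theorem~\ref{thm:mod-ch}.
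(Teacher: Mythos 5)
Your overall strategy---deduce everything from Theorem \ref{thm:basic_end} and then promote the zero-modulus statement from $X_0$ to all of $X$---matches the paper in its first half but diverges in the second. The paper simply invokes the reduction Theorem \ref{thm:countable-single}: the only non-simple end of $X$ is the end of $X_0$ (each attached piece has finite area, hence only simple ends, whose curve families are null), so Theorem \ref{thm:basic_end} finishes the proof in two lines. You instead re-prove that reduction by hand with an explicit admissible-metric surgery. The surgery idea (reroute each excursion of $\gamma\in\Gamma_\infty^X$ into $F_n$ along an arc of $\beta_n$ of length at most $M/2$, and pay for the rerouting with a barrier on a collar of $\beta_n$ inside $F_n$) is sound in outline and, if completed, would yield the quantitative bound $\mathrm{mod}(\Gamma_\infty^X)\le C\,\mathrm{mod}(\Gamma_0)$, which is more self-contained than citing the harmonic-function machinery behind Theorems \ref{thm:mod-ch} and \ref{thm:countable-single}. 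One small point: you assert that $X$ has a single topological end; the paper's own proof allows the attached surfaces to carry punctures (extra simple ends), but those contribute null subfamilies of $\Gamma_\infty^X$, so this is harmless.

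The genuine gap is the ``boundary trace estimate.'' Both the calibration of your barrier (whose amplitude you take proportional to the average of $\rho_0$ along $\beta_n$) and your final bookkeeping require $\sum_n\int_{\beta_n}\rho_0\,ds$ and $\sum_n\int_{\beta_n}\rho_0^2\,ds$ to be controlled by $\int_{X_0}\rho_0^2\,dA$. No such inequality holds for an arbitrary admissible Borel metric $\rho_0\in L^2$: the restriction of an $L^2$ density to a fixed curve need not even be locally integrable, so $\int_{\beta_n}\rho_0\,ds$ can be infinite while $\int_{X_0}\rho_0^2\,dA$ is as small as you like; the collar lemma gives disjoint collars of definite width $w=w(M)$ (since $\ell(\beta_n)\le M$), not a trace inequality. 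The standard repair is to perform the surgery not on $\beta_n$ itself but on a well-chosen level curve $\beta_n^{t}$ inside the collar $A_n\subset X_0$: by Fubini there is a $t$ with $\int_{\beta_n^{t}}\rho_0^2\,ds\le w^{-1}\int_{A_n}\rho_0^2\,dA$, and Cauchy--Schwarz on the curve then bounds $\int_{\beta_n^{t}}\rho_0\,ds$ by $C(M)\bigl(\int_{A_n}\rho_0^2\,dA\bigr)^{1/2}$; rerouting at $\beta_n^{t}$ and placing the barrier in the region between $\beta_n^{t}$ and $F_n$ makes your $L^2$ estimate correct, since $\sum_n\int_{A_n}\rho_0^2\,dA\le\int_{X_0}\rho_0^2\,dA$ by disjointness. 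With that modification your argument closes; as written, the trace step fails.
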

	
	\begin{proof}
		The basic end surface satisfies the condition that the modulus of the curve family going to its end is zero. The attached surface can have a genus going to infinity, but each individual attached surface does not have an infinite end. Therefore, the curve families in the attached surfaces going to infinity have zero modulus as well. By Theorem \ref{thm:countable-single},  the surface $X$ is parabolic.
	\end{proof}

	In fact, the proof of the above theorem only requires that the cuffs $\alpha_n$ lift to geodesics that do not accumulate in $\mathbb{H}$. It is possible that $\ell (\beta_n)$ are unbounded, and this still to be true.
	
	We are also in a position to have a general theorem regarding the parabolicity of surfaces with countably many ends that have arbitrarily large sequences of cuffs going to every topological end.
	
	\begin{theorem}
		\label{thm:main-countable}
		Let $X$ be a Riemann surface with at most countably many ends $\mathcal{E}$. Assume that each subsurface $X_{e_j}$ for $e_j\in\mathcal{E}'$ has a sequence of cuffs $\alpha_{j,n}$ converging to $e_j$ with increasing lengths $\ell_{j,n}=\ell (\alpha_{j,n})$ and twist in $\{ 0,1/2\}$ with infinitely many twists equal to $1/2$. If $\ell_{j,n}$ satisfy (\ref{parabolic:alternating}) for each $j$ then $X$ is parabolic.
	\end{theorem}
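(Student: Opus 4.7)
The plan is to reduce the statement to a one-end problem via Theorem \ref{thm:countable-single} and then apply the shear technology from Theorem \ref{thm:basic_end}. By Theorem \ref{thm:countable-single}, in order to prove $X \in O_G$ it suffices to verify that for every non-isolated end $e_j \in \mathcal{E}'$, the curve family $\Gamma_{e_j}$ in $X_{e_j}$ joining the boundary cuff $\alpha_{e_j}$ to the end $e_j$ has $\mathrm{mod}(\Gamma_{e_j}) = 0$. Thus the proof splits into establishing, for each $j$ independently, that the modulus of $\Gamma_{e_j}$ vanishes under the hypothesis that $\{\ell_{j,n}\}$ satisfies \eqref{parabolic:alternating}.

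For a fixed end $e_j$, the first step is to identify $X_{e_j}$ with a basic end surface carrying possibly complicated subsurface attachments. Recall from the decomposition in Section \ref{sec:prelim} that between consecutive cuffs $\alpha_{j,n}$ and $\alpha_{j,n+1}$ there sits a finite-type subsurface along whose boundary geodesics $\beta_{j,n,m}$ one may attach either finite-area pieces or entire subsurfaces $X_{e'}$ corresponding to ends $e'$ isolated at earlier stages than $e_j$. Every curve in $\Gamma_{e_j}$ must accumulate only at $e_j$, hence it cannot descend into any attached subsurface (which accumulates to a different end). Consequently, $\Gamma_{e_j}$ is carried by the basic end surface $X_{e_j}^0$ obtained by truncating along the collection of $\beta_{j,n,m}$ cuffs, and we are reduced to estimating the modulus of an analogous one-end curve family exactly as in Theorem \ref{thm:basic_end} and Corollary \ref{cor:lochness}.

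Next, I would invoke the front-back symmetry induced by the assumption $t_{j,n} \in \{0, 1/2\}$. This symmetry implies that it suffices to bound from above (in fact, show zero for) the modulus of the curve family in the planar piece $(X_{e_j}^0)^\star$, which lifts conformally to an infinite ideal polygon $\widetilde{(X_{e_j}^0)^\star} \subset \mathbb{H}$ whose boundary at infinity, apart from ideal vertices, needs to reduce to a single point. Following the zig-zag construction from the proof of Theorem \ref{thm:parabolicity half-twists}, I insert the auxiliary geodesics $g_{2n}$ connecting adjacent lifts of cuffs and apply the shear identities \eqref{eq: odd shears even index}, \eqref{eq: odd shears odd index}, \eqref{eq: odd shears zero twist}, together with the length estimate \eqref{eq:eta_n}. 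The hypothesis \eqref{parabolic:alternating} yields, as in the flute case, that the piecewise horocyclic path has infinite length, hence the nested geodesics do not accumulate in $\mathbb{H}$ and $\mathrm{mod}(\Gamma_{e_j}) = 0$.

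The main technical obstacle is that, unlike the flute or pure Loch-Ness setting, the $\beta$-cuffs bounding the subsurfaces attached between consecutive $\alpha_{j,n}$ need not have uniformly bounded length, and the finite subsurfaces between $\alpha_{j,n}$ and $\alpha_{j,n+1}$ may have high genus and many boundaries. What rescues the argument is the observation, already flagged after Corollary \ref{cor:lochness}, that the length $\ell(\eta_n)$ of the orthogeodesic between the lifts $g_{2n-1}$ and $g_{2n+1}$ is controlled purely by $\ell_{j,n}$ and $\ell_{j,n+1}$: the intervening geometry affects only the combinatorial position of the feet on $\alpha_{j,n}$, which is exactly what is recorded by the twist in $\{0,1/2\}$. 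Making this geometric reduction precise for general attached pieces, so that the shear formulas transfer verbatim, is the delicate part; once it is in place, the estimates \eqref{eq:sum odd n}-\eqref{eq:sum even n} and the hypothesis on $\sigma_k$ finish the proof of $\mathrm{mod}(\Gamma_{e_j})=0$, and Theorem \ref{thm:countable-single} delivers $X \in O_G$.
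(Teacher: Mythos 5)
Your proposal takes essentially the same route as the paper: reduce to one end at a time via Theorem \ref{thm:countable-single} and then apply Theorem \ref{thm:basic_end} to each basic end surface $X_{e_j}$; the extra detail you supply is just an unpacking of the proof of Theorem \ref{thm:basic_end}, which the paper simply cites. The obstacle you flag about the $\beta$-cuffs not being uniformly bounded is legitimate, but it is already absorbed into the hypotheses of Theorem \ref{thm:basic_end} (which explicitly assumes $\ell(\beta_n)$ bounded above), an assumption the paper carries implicitly in this theorem as well.
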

	
	\begin{proof}
		By Theorem \ref{thm:countable-single}, it is enough to prove that the family of curves starting on the boundary geodesic of $X_{e_j}$ that is attached to an end surface in the previous level and going to the end $e_j$ has zero modulus. This follows from the condition (\ref{parabolic:alternating}) by Theorem \ref{thm:basic_end}.
	\end{proof}
	
	We note that the condition (\ref{parabolic:alternating}) is satisfied for arbitrarily large cuffs in each end. One example of accomplishing this is to make infinitely many pairs of adjacent cuffs equal to each other while keeping the other lengths as assigned by some double sequence of positive numbers $\{a_{j,n}\}$ increasing in $n$ for each fixed $j$.

	\begin{corollary}
		\label{cor:any-lengths}
		Let $X$ be a topological surface with countably many ends $\mathcal{E}$. Let $X_{e_j}$ be the end surface corresponding to $e_j\in\mathcal{E}'$ and let $\{\alpha_{j,n}\}_{n=1}^{\infty}$ be the cuffs in $X_{e_j}$ accumulating to $e_j$. Given a double sequence $\{ a_{j,n}\}$ of positive numbers increasing in $n$ for each $j$, there exists a Riemann surface structure on $X$ such that $\ell (\alpha_{j,n})\geq a_{j,n}$ and $X\in O_G$.
		
		Moreover, if $n_{j,k}$ is an infinite subsequence and if we choose $\ell (\alpha_{j,n})=a_{j,n}$ for all $n\neq n_{j,k}-1$, and $\ell (\alpha_{j,n_{j,k}-1})=\ell (\alpha_{j,n_{j,k}})$ then $X\in O_G$.
	\end{corollary}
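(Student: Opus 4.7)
The plan is to reduce this corollary to Theorem \ref{thm:main-countable} by producing, for each end $e_j\in\mathcal{E}'$, an explicit choice of cuff lengths and twists so that the alternating-sum condition (\ref{parabolic:alternating}) is satisfied. The mechanism is exactly the telescoping trick used in Corollary \ref{cor:KM1}: pair up adjacent half-twisted cuffs of equal length so that the partial alternating sums vanish, making every even-indexed term in the series equal to $1$.

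Fix $j$ and pass, if necessary, to a thinner subsequence of $\{n_{j,k}\}$ so that $n_{j,k+1}\geq n_{j,k}+2$ for all $k$. For the first statement of the corollary we are free to pick $n_{j,k}=2k$; for the ``Moreover'' statement we may have to thin a given $\{n_{j,k}\}$ (consecutive entries would merely produce longer runs of equal lengths but complicate the indexing, so it is convenient to avoid them). With this disjointness, the pairs $P_{j,k}:=\{n_{j,k}-1,n_{j,k}\}$ are mutually disjoint. Now define
\[
\ell(\alpha_{j,n}):=\begin{cases} a_{j,n_{j,k}}, & n=n_{j,k}-1\text{ for some }k,\\ a_{j,n}, & \text{otherwise}.\end{cases}
\]
Because $\{a_{j,n}\}_n$ is increasing, $\ell(\alpha_{j,n})\geq a_{j,n}$ and $\{\ell(\alpha_{j,n})\}_n$ remains non-decreasing. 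Assign the twists $t_{j,n}=1/2$ on $\bigcup_k P_{j,k}$ and $t_{j,n}=0$ elsewhere; when $X_{e_j}$ has Loch-Ness-monster character, also pick each internal genus-cutting cuff $\beta_{j,n}$ of uniformly bounded length, say $\ell(\beta_{j,n})=1$. This choice determines a hyperbolic (hence Riemann) structure on $X$ satisfying the length lower bound.

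Enumerate $\bigcup_k P_{j,k}=\{m_{j,1}<m_{j,2}<\cdots\}$; by disjointness $m_{j,2k-1}=n_{j,k}-1$ and $m_{j,2k}=n_{j,k}$, so by construction $\ell(\alpha_{j,m_{j,2k-1}})=\ell(\alpha_{j,m_{j,2k}})$. The alternating sum from Theorem \ref{thm:parabolicity half-twists} becomes
\[
\sigma_{j,2k}=\bigl(\ell(\alpha_{j,m_{j,2k}})-\ell(\alpha_{j,m_{j,2k-1}})\bigr)+\cdots+\bigl(\ell(\alpha_{j,m_{j,2}})-\ell(\alpha_{j,m_{j,1}})\bigr)=0,
\]
so $e^{-\sigma_{j,2k}/2}=1$ for every $k$. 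Hence $\sum_{k=1}^{\infty}e^{-\sigma_{j,k}/2}=\infty$ for each $j$, and Theorem \ref{thm:main-countable} gives $X\in O_G$.

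The analytic heavy lifting has already been carried out in Theorem \ref{thm:basic_end} (using the shear estimates of Theorem \ref{thm:parabolicity half-twists}) and the end-by-end reduction in Theorem \ref{thm:countable-single}, so there is no serious obstacle left. The only minor bookkeeping issue is arranging that the paired indices $P_{j,k}$ are disjoint; this is either immediate (we pick the subsequence ourselves) or a harmless thinning step in the ``Moreover'' case.
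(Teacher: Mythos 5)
Your proposal is correct and follows essentially the same route as the paper: the paper's (implicit) proof is exactly the remark preceding the corollary, namely to equalize the lengths of infinitely many pairs of adjacent cuffs so that the alternating sums $\sigma_{j,2k}$ telescope to zero, making the series \eqref{parabolic:alternating} diverge, and then to invoke Theorem \ref{thm:main-countable}. Your additional bookkeeping (thinning $\{n_{j,k}\}$ so the pairs are disjoint, checking monotonicity of the modified lengths, and bounding $\ell(\beta_{j,n})$) is a harmless and correct elaboration of that argument.
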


\end{document}